\begin{document}
    \title{\textbf{\normalsize\MakeUppercase{The Ribes-Zalesskiĭ product theorem via foldings and extensions}}}
    \author{\footnotesize\MakeUppercase{Zhaoshen Zhai}}
    \date{}
    \maketitle
    \freefootnote{\textit{Date}: \today.}
    \freefootnote{2020 \textit{Mathematics Subject Classification.} 57M07, 20E10, 20E18.}
    \freefootnote{\textit{Key words and phrases.} Product separability, pseudovariety, profinite, pro-$\mathbf{V}$, foldings, immersions, $p$-elementary extensions.}

    \begin{center}
        \vspace{-0.3in}
        \begin{minipage}{0.85\textwidth}\setstretch{0.9}
            {\footnotesize{\textsc{Abstract.}} We present an exposition of the Auinger-Steinberg proof of the Ribes-Zalesskiĭ product theorem for pro-$\mathbf{V}$ topologies, where $\mathbf{V}$ is a pseudovariety of groups closed under extensions with abelian kernel. This proof is self-contained and is accessible to those acquainted with coverings of graphs, and as such, it provides an easy entry point to various other deep theorems for which the product theorem is formally equivalent to.}
        \end{minipage}
        \vspace{0.1in}
    \end{center}

    \section*{Introduction}

    \subsection*{The product theorem}

    The purpose of this note is to give a self-contained geometric proof of a generalization of the Ribes-Zalesskiĭ product theorem \cite{RZ93}, which in its original form states that the product $H_1\cdots H_n$ of finitely generated subgroups $H_1,\dots,H_n$ of a free group $F$ is closed in the profinite topology of $F$. This is a vast generalization of Hall's theorem \cite{Hal49}, which is the case when $n=1$, and has spurred much interaction between profinite group theory, semigroup theory, geometric group theory, and model theory.
    \begin{itemize}
        \item On the semigroup-theoretic side, this theorem was first conjectured by Pin and Reutenauer \cite{PR91}, who used it to establish Rhodes' type II conjecture, a major conjecture in semigroup theory at that time. The type II conjecture was also independently proven by Ash \cite{Ash91}, using completely different techniques (see \cite{HMPR91}), and since then, there has been major effort in unifying and reinterpreting the techniques of Ash and Ribes-Zalesskiĭ under a common framework; see, for instance, \cite{Ste02, AS03}.
            \vspace{-0.2in}
        \item The product theorem can also be thought of as a strong \textit{separability} property of free groups, generalizing Hall's Theorem that free groups are \textit{subgroup separable}, so groups satisfying the product theorem are said to be \textit{product separable}. Beyond the original product theorem on free groups, You \cite{You94} showed that surface groups are product separable. In another direction, Minasyan \cite{Min06} showed that the product of $n$ quasiconvex subgroups of geometrically-finite extended residually finite hyperbolic groups is closed in its profinite topology, which also generalizes the original product theorem.
            \vspace{-0.05in}
        \item Its connection with model theory appeared when Herwig and Lascar \cite{HL00} showed that the product theorem is formally equivalent to extension properties of partial automorphisms (\textit{EPPA}) of certain finite relational structures, which is important due to its connections with topological dynamics, Ramsey theory, and the study of automorphism groups of first-order structures; see, for instance \cite{The15, Las03}. This formal equivalence has also been generalized by Coulbois \cite{Cou01}, who in turn showed that groups satisfying the product theorem is closed under free products. The EPPA for metric spaces \cite{Sol05} has also been related to the product theorem by \cite[\S8]{Sab14}, where a new proof of the EPPA for metric space is given using the product theorem, and also by Rosendal \cite{Ros11} and Doucha-Malicki \cite{DM19}.
    \end{itemize}

    As it turns out, stronger forms of the product theorem were needed for applications to both semigroup theory and model theory. To state this strengthening, we need some more terminology. Let $\mathbf{V}$ be a \textit{pseudovariety} of groups, i.e., a class of finite groups closed under subgroups, quotients, and finite direct products. We say that $\mathbf{V}$ is \textit{closed under extensions} (\textit{with abelian kernel}) if $G\in\mathbf{V}$ whenever $N,G/N\in\mathbf{V}$ for any normal (abelian) subgroup $N\nsubeq G$. For a group $G$, the collection of normal subgroups $N\nsubeq G$ such that $G/N\in\mathbf{V}$ forms a neighborhood base around the identity, which generates the \textit{pro-$\mathbf{V}$ topology} of $G$.

    The main theorem of this note is as follows. It was first established by Ribes-Zalesskiĭ \cite{RZ94} when $\mathbf{V}$ is closed under all extensions, and by Auinger-Steinberg \cite{AS05} in the more general form stated below.

    \begin{mainTheorem}\label{mthm}
        Let $\mathbf{V}$ be a pseudovariety of groups that is closed under extensions with abelian kernel and let $F$ be a free group. If $H_1,\dots,H_n$ are finitely generated subgroups of $F$ which are closed in the pro-$\mathbf{V}$ topology of $F$, then their $n$-product coset $H_1\cdots H_n$ is also closed in the pro-$\mathbf{V}$ topology of $F$.
    \end{mainTheorem}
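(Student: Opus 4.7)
The plan is to translate the assertion into the language of finite pointed immersions over a bouquet of circles, and then, for each word $w$ outside $H_1\cdots H_n$, to construct a finite Galois cover of the bouquet whose deck group lies in $\mathbf{V}$ and which witnesses the non-membership $w \notin H_1\cdots H_n$ after passing to the corresponding finite quotient of $F$. First, identify $F$ with $\pi_1(B)$ for the bouquet $B$ of circles indexed by a free basis, and for each $i$ represent $H_i$ by its Stallings graph $\Gamma_i \to B$: a finite pointed folded graph whose fundamental group at the basepoint is exactly $H_i$. Concatenate the $\Gamma_i$ along successive basepoints to obtain a finite pointed immersion $\Gamma \to B$ with distinguished vertices $v_0, v_n$; then $w \in H_1\cdots H_n$ iff some path from $v_0$ to $v_n$ in $\Gamma$ reads $w$ under the labelling by $B$.

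Next, I would reformulate pro-$\mathbf{V}$ separability as follows: the product $H_1\cdots H_n$ is pro-$\mathbf{V}$ closed iff for every $w$ that is not readable as above there exist a finite Galois cover $\tilde{B} \to B$ with deck group in $\mathbf{V}$ and an embedding $\Gamma \hookrightarrow \tilde{B}$ over $B$ such that no path from the lift of $v_0$ to the lift of $v_n$ in $\tilde{B}$ reads $w$. Indeed, such a cover corresponds to a normal subgroup $N \triangleleft F$ with $F/N \in \mathbf{V}$, and non-readability of $w$ in $\tilde{B}$ translates exactly to $wN \notin H_1 N \cdots H_n N$.

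The cover $\tilde{B}$ is then constructed as an iterated tower of $p$-elementary abelian extensions. The key lemma, established by an explicit cocycle construction, states: for any finite immersion $\Gamma \to \Delta$ that is not yet a covering, there exist a prime $p$ and a finite Galois cover $\Delta' \to \Delta$ whose deck group is an elementary abelian $p$-group together with a lift $\Gamma \hookrightarrow \Delta'$, such that the ``defect'' of the immersion from being a cover (measured, for instance, by the number of unmatched half-edges) strictly decreases. Iterating produces a tower $B \leftarrow \tilde{B}^{(1)} \leftarrow \cdots \leftarrow \tilde{B}^{(N)}$ of Galois covers in which $\Gamma \to \tilde{B}^{(N)}$ becomes an honest covering; by repeated closure of $\mathbf{V}$ under extensions with abelian kernel, the total deck group $\Aut(\tilde{B}^{(N)}/B)$ lies in $\mathbf{V}$.

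The technical heart is the single-layer construction: one labels the ``missing'' half-edges of $\Delta$ by elements of a suitable $\F_p$-vector space so as to satisfy a cocycle condition along closed paths of $\Delta$, producing the required elementary abelian cover, while simultaneously lifting $\Gamma$ so that no new reading of $w$ between the lifts of $v_0$ and $v_n$ is created. The hypothesis that each $H_i$ is itself pro-$\mathbf{V}$ closed enters here: it allows us to initially replace each $\Gamma_i$ by its image in some $\mathbf{V}$-cover of $B$, thereby reducing the control of $w$ to paths crossing between consecutive $\Gamma_i$ rather than remaining within a single one. Verifying termination of the tower, and showing that the compatible choices of lifts at each step can always be made, is the principal obstacle to overcome.
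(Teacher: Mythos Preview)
The proposal has genuine gaps at each of its three stages. First, the ``concatenation along successive basepoints'' does not yield an immersion over $B$ with the stated path-reading property: each Stallings graph has a single basepoint, so wedging forces $v_0=v_n$, and the loop-language at that vertex is $\langle H_1,\dots,H_n\rangle$, not $H_1\cdots H_n$. In fact no pointed labeled immersion can have path-language exactly $H_1\cdots H_n$ in general (take $H_1=\langle a\rangle$, $H_2=\langle b\rangle$ in $F(a,b)$: since $1\in H_1H_2$ you need $v_0=v_n$, whence every reduced word in $a,b$ is readable). The paper avoids this by keeping $\Gamma=\bigsqcup_i\mc S(H_i)$ disjoint and attaching a path labeled $w$ to $\mc S(H_n)$; membership $w\in H_1\cdots H_n$ is encoded not by a single path in $\Gamma$ but by an $n$-tuple of paths $\gamma_i\subseteq\Gamma_i$ whose concatenated label is trivial in $F$. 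Second, your ``iff'' reformulation fails in the forward direction: closedness of $H_1\cdots H_n$ gives, for each $w$, some $N\nsubeq F$ with $F/N\in\mathbf V$ and $wN\cap H_1\cdots H_n=\varnothing$, but supplies no reason for $\Gamma$ to embed into the corresponding cover.

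Most seriously, the key lemma is incoherent as stated. The ``defect'' of $\Gamma\to\Delta$ from being a covering---the count of missing half-edges at vertices of $\Gamma$---is intrinsic to the $X$-labeled graph $\Gamma$ and is unchanged by lifting along any cover $\Delta'\to\Delta$; no tower of elementary abelian covers of $B$ can decrease it. If instead you mean that $\Gamma\hookrightarrow\tilde B^{(N)}$ should become an isomorphism onto a component, that would force $\Gamma$ itself to be a cover of $B$, which it is not. The paper's mechanism is of a different nature and never tries to complete $\Gamma$ to a cover. It uses closedness of each $H_i$ to expand $\mc S(H_i)$ to a cover with deck group $G_i\in\mathbf V$, forms $G\subseteq\prod_iG_i$, and sets $K=G_{n-1}$ where $G_0=G$ and $G_i=G_{i-1}^{\mathbf{Ab}(p_i)}$. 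The $p$-elementary extensions are used not to fill in missing edges of $\Gamma$ but to force an edge-counting argument in $\mc C(G_{n-2})$: assuming $[w]_K\in[H_1\cdots H_n]_K$, the identity $[u]_{G^{\mathbf{Ab}(p)}}=\bigl(\sum_e[u(e)]_p\,e,\,[u]_G\bigr)$ guarantees that some intermediate subgraph $\Delta_j$ meets the component of $1$ in $\Delta_1\cap\Delta_n$, and a vertex there splits the cycle $\eta_1'\cdots\eta_n'$ into two shorter cycles to which induction on $n$ applies. Your outline contains neither this inductive splitting nor the Cayley-graph argument that drives it.
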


    In view of Hall's Theorem, this generalizes the product theorem. Our impetus for this generalization lies in our desire to study a certain finer profinite topology on $F$, the so-called \textit{pro-odd} topology, given by taking $\mathbf{V}$ to be the pseudovariety of all odd-order finite groups. Herwig and Lascar \cite{HL00} showed that the EPPA for the class of tournaments is equivalent to a characterization of the subgroups of $F_2$ which are closed in the pro-odd topology. Generalizing this, Huang, Pawliuk, Sabok, and Wise \cite{HPSW19} proved the equivalence of EPPA for classes of hypertournaments with characterizations of subgroups of the free groups closed in other profinite topologies. Both these results rely on Theorem \ref{mthm}; for details, see \cite{Che25}.

    The semigroup-theoretic side of Theorem \ref{mthm} stems from its connections with universal algebra \cite{Alm95} and profinite semigroup theory \cite{AW95}, where results on certain classes of pseudovarieties like Theorem \ref{mthm} can be used to shed light on the structure of finite semigroups.

    \vspace{-0.02in}
    \subsection*{Proofs of the product theorem}

    Originally, Ribes and Zalesskiĭ \cite{RZ93} proved the product theorem in the profinite case (where the hypothesis that the subgroups $H_1,\dots,H_n$ are closed can be dropped, by Hall's Theorem), but their proof generalizes verbatim to the pro-$\mathbf{V}$ case when $\mathbf{V}$ is closed under extensions \cite{RZ94}. There, a theory of \textit{profinite graphs}, developed by \cite{GR78, MZ89} (see also \cite{Rib17}), takes center stage, where the structure of the pseudovariety $\mathbf{V}$ imposes strong geometric constraints on the \textit{profinite Cayley graphs} of pro-$\mathbf{V}$ completions of free groups; namely, these profinite graphs are \textit{pro-$\mathbf{V}$ trees} if and only if $\mathbf{V}$ is closed under extensions with abelian kernel \cite{AW95}. Auinger \cite{Aui17}, and later also with Bors \cite{AB19}, showed that the product theorem applies to other much larger classes of groups, which are not necessarily pseudovarieties, but which are defined purely `geometrically' as in the characterization above.

    However, the machinery of profinite graphs is quite heavy and might not appeal to those interested only in its applications. Luckily, the original version of the product theorem (that is, in the profinite case) now has several proofs, stemming from either the formal equivalence between the theorems of Ash, Herwig-Lascar, and Ribes-Zalesskiĭ \cite{HL00, Cou01, Ste02, Ros11, ABO25}, or by other geometric means \cite{AS05, Min06}.

    The situation is less desirable in the general pro-$\mathbf{V}$ case of the product theorem. Although the aforementioned condition on profinite Cayley graphs of free groups holds when $\mathbf{V}$ is closed under extensions with abelian kernel, the original proof \cite{RZ93} requires the stronger condition that $\mathbf{V}$ is closed under \textit{all} extensions. Even forgoing this generalization, it is not clear whether any of the alternative proofs of the profinite case generalizes to the pro-$\mathbf{V}$ case, except for that of Auinger-Steinberg \cite{AS05}. It is thus the goal of this note to recast their proof in a more elementary geometric language, so as to make it as self-contained as possible.

    \vspace{-0.02in}
    \subsection*{Organization}

    The proof of Theorem \ref{mthm} we present here follows \cite{AS05}, but at a more leisurely pace. We start by giving some basic characterizations of the pro-$\mathbf{V}$ topology, which are more amenable to combinatorial and geometric tools. We then take a didactic approach by first proving the case $n=1$ (i.e., Hall's Theorem) using \textit{Stallings' foldings} \cite{Sta83} and elementary covering space theory of graphs, as these techniques generalize directly to the general case. These Stallings' foldings give rise to immersions of graphs, which can be extended to covering graphs; crucial to the pro-$\mathbf{V}$ case is that we need to control the group of deck transformations of these coverings, which we will elaborate on in due time.

    In the general case, we will need to augment these covering graphs with additional information depending on the pseudovariety $\mathbf{V}$, which we do so by the so-called \textit{universal $p$-elementary extension} of a group \cite{Els99}, for some prime $p$. We will explain how these extensions naturally arise in the proof.

    \vspace{-0.02in}
    \subsection*{Acknowledgements}

    I would like to thank Professor Marcin Sabok for supervising me for this project, for his consistent support, patience, and feedback, and for guiding me through this fulfilling research experience. I also thank Julian Cheng for helpful discussions and for explaining the original Ribes-Zalesskiĭ proof to me, and also I thank Karl Auinger for his valuable feedback on this note. This work was partially supported by McGill University’s SURA (Science Undergraduate Research Award) grant for the summer of 2025.

    \section{Preliminaries: Stallings' Foldings and Universal $p$-elementary Extensions}

    Although Theorem \ref{mthm} is about profinite topologies on free groups, surprisingly little profinite group theory is needed for its proof. All we need are the following three basic facts, which can also be found in \cite{RZ10}.

    \begin{lemma}\label{lem:open}
        Let $G$ be a group. A subgroup $H\subseq G$ is open in the pro-$\mathbf{V}$ topology of $G$ if and only if $H$ has finite index in $G$ and $G/H_G\in\mathbf{V}$, where $H_G\coloneqq\bigcap_{g\in G}gHg^{-1}$ is the {\em normal core} of $H$ in $G$.
    \end{lemma}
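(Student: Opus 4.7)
The plan is to prove both directions directly from the definition of the pro-$\mathbf{V}$ topology, using that its basic open neighborhoods of the identity are exactly the normal subgroups $N \trianglelefteq G$ with $G/N \in \mathbf{V}$, and that in any topological group a subgroup is open iff it contains an open neighborhood of the identity.

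For the ``if'' direction, I would assume $[G:H] < \infty$ and $G/H_G \in \mathbf{V}$. Since $H_G$ is by definition normal in $G$ and $G/H_G \in \mathbf{V}$, the normal core $H_G$ is itself a basic open neighborhood of the identity in the pro-$\mathbf{V}$ topology. As $H_G \subseteq H$, the subgroup $H$ contains an open neighborhood of $1$, and translating by elements of $H$ shows $H = \bigcup_{h \in H} hH_G$ is open. (The finite-index hypothesis is actually redundant here, since $[G:H] \leq [G:H_G] = |G/H_G| < \infty$ already follows from $G/H_G \in \mathbf{V}$; I would remark on this.)

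For the ``only if'' direction, assume $H$ is open. Then $H$ contains some basic open neighborhood of $1$, i.e., a normal subgroup $N \trianglelefteq G$ with $G/N \in \mathbf{V}$. Since $N$ is normal and contained in $H$, we have $N \subseteq gHg^{-1}$ for every $g \in G$, hence $N \subseteq H_G$. Consequently $G/H_G$ is a quotient of $G/N$, and because $\mathbf{V}$ is a pseudovariety (closed under quotients), $G/H_G \in \mathbf{V}$. Finiteness of $G/N$ (as a member of the pseudovariety $\mathbf{V}$ of \emph{finite} groups) forces $[G:H] \leq [G:N] < \infty$, completing the proof.

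There is no real obstacle here; the lemma is essentially an unpacking of the definitions of the pro-$\mathbf{V}$ topology, of pseudovariety (closure under quotients), and of the normal core. The only subtlety worth flagging is that the closure of $\mathbf{V}$ under quotients is what lets us pass from an arbitrary pro-$\mathbf{V}$ open neighborhood $N \subseteq H$ to the canonical one $H_G$, which is the largest normal subgroup of $G$ contained in $H$.
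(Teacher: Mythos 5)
Your proof is correct and follows essentially the same route as the paper: containment of a basic open normal subgroup $N$ gives finite index and $N\subseteq H_G$, hence $G/H_G$ is a quotient of $G/N\in\mathbf{V}$; conversely $G/H_G\in\mathbf{V}$ makes $H_G$ open and $H\supseteq H_G$ open. Your observation that the finite-index hypothesis in the ``if'' direction is redundant (since $\mathbf{V}$ consists of finite groups) is a valid small addition not made explicit in the paper.
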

    \begin{proof}
        If $H\subseq G$ is open, then there is a normal subgroup $N\nsubeq G$ with $G/N\in\mathbf{V}$ such that $N\subseq H$, so $H$ has finite index in $G$. Moreover, since $N\subseq H_G$, we have $G/N\onto G/H_G$, and thus $G/H_G\in\mathbf{V}$ as $\mathbf{V}$ is closed under quotients. Conversely, if $G/H_G\in\mathbf{V}$, then $H_G$ is open, and hence so is $H$.
    \end{proof}

    \begin{lemma}\label{lem:closed}
        Let $G$ be a group. The following are equivalent for a subgroup $H\subseq G$.
        \begin{enumerate}
            \item $H$ is closed in the pro-$\mathbf{V}$ topology of $G$.
                \vspace{-0.05in}
            \item $H$ is the intersection of all subgroups of $G$ containing $H$ which are open in the pro-$\mathbf{V}$ topology of $G$.
                \vspace{-0.05in}
            \item For each $g\not\in H$, there is a finite group $K\in\mathbf{V}$ and a morphism $\phi:G\onto K$ such that $\phi(g)\not\in\phi(H)$.
        \end{enumerate}
    \end{lemma}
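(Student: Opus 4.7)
The plan is to prove the cycle $(2)\Rightarrow(1)\Rightarrow(3)\Rightarrow(2)$, using only Lemma \ref{lem:open} and the fact that the pro-$\mathbf{V}$ topology makes $G$ into a topological group with a neighborhood basis at the identity given by the normal subgroups $N\nsubeq G$ with $G/N\in\mathbf{V}$. For $(2)\Rightarrow(1)$, observe that any open subgroup $L\subseq G$ is also closed, since its complement is the union of the cosets $gL$ for $g\not\in L$, each of which is open by continuity of translation; hence any intersection of open subgroups is closed.

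For $(1)\Rightarrow(3)$, given $g\not\in H$, the closedness of $H$ produces a basic open neighborhood $gN$ of $g$ disjoint from $H$, where $N\nsubeq G$ and $K\coloneqq G/N\in\mathbf{V}$. Taking $\phi\colon G\onto K$ the quotient map, the disjointness $gN\cap H=\e$ translates exactly to $\phi(g)\not\in\phi(H)$, as required.

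For $(3)\Rightarrow(2)$, let $I$ denote the intersection of all open subgroups of $G$ containing $H$; since $H\subseq I$ trivially, it suffices to exhibit, for each $g\not\in H$, an open subgroup containing $H$ but not $g$. Apply (3) to $g$ to obtain a morphism $\phi\colon G\to K$ with $K\in\mathbf{V}$ and $\phi(g)\not\in\phi(H)$, and set $L\coloneqq\phi^{-1}(\phi(H))$. By construction $H\subseq L$ and $g\not\in L$, and $L$ is open because it contains the normal subgroup $\ker\phi\nsubeq G$, whose quotient $G/\ker\phi$ embeds into $K\in\mathbf{V}$ and hence lies in $\mathbf{V}$ by closure under subgroups, so Lemma \ref{lem:open} applies.

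No real obstacles arise: all three implications are soft topological arguments. The only minor subtlety is the appeal to closure of $\mathbf{V}$ under subgroups in $(3)\Rightarrow(2)$, which is needed to accommodate the lack of a surjectivity hypothesis on $\phi$ in the statement of~(3).
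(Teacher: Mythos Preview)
Your proof is correct and uses essentially the same soft topological arguments as the paper, just cycling in the reverse direction $(2)\Rightarrow(1)\Rightarrow(3)\Rightarrow(2)$ rather than the paper's $(1)\Rightarrow(2)\Rightarrow(3)\Rightarrow(1)$. One small correction: the arrow $\phi:G\onto K$ in~(3) \emph{does} assert surjectivity, so $G/\ker\phi\cong K\in\mathbf{V}$ directly and your appeal to closure of $\mathbf{V}$ under subgroups in $(3)\Rightarrow(2)$ is unnecessary (though harmless).
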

    \begin{proof}
        Let $\mc{N}$ denote all normal subgroups $N\nsubeq G$ such that $G/N\in\mathbf{V}$. Then (1) $\Rightarrow$ (2) since the intersection of all open subgroups of $G$ containing $H$ coincides with $\bigcap_{N\in\mc{N}}HN$, since if $x\in\bigcap_{N\in\mc{N}}HN$ and $H\subeq K\subseq_oG$, then $K_G\in\mc{N}$ by Lemma \ref{lem:open}, and so $x\in HK_G\subseq K$.

        For (2) $\Rightarrow$ (3), note that whenever $g\not\in H$, there is an open normal subgroup $N\in\mc{N}$ containing $H$ such that $g\not\in N$, so $K\coloneqq G/N$ and the canonical map $\phi:G\onto G/N$ works. Finally, for (3) $\Rightarrow$ (1), this can be reversed by using $gN$ to separate $g\not\in H$ from $H$, where $N\coloneqq\ker\phi$.
    \end{proof}

    We will use the above characterization of closed subgroups of free groups in the eventual proof of Theorem \ref{mthm}. In that context, we first note that since $H_1,\dots,H_n$ are finitely generated, they are enclosed in a finitely generated free factor of $F$. The following lemma allows us to assume, without loss of generality, that $F$ has finite rank in the first place.

    \begin{lemma}\label{lem:free_factor}
        If $H$ is a free factor of $G$, then the pro-$\mathbf{V}$ topology on $H$ coincides with the topology induced from the pro-$\mathbf{V}$ topology on $G$.
    \end{lemma}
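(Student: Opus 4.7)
The plan is to exploit the free-factor hypothesis via the universal property of free products: writing $G = H * K$, any homomorphism out of $H$ extends to a homomorphism out of $G$ by sending $K$ wherever we like. Since both topologies on $H$ are group topologies, it suffices to compare their neighborhood bases at the identity, which by Lemma \ref{lem:open} reduces to comparing the families of normal subgroups of $H$ with $\mathbf{V}$-quotients arising from the two topologies.

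One direction is formal. Given $N \nsubeq G$ with $G/N \in \mathbf{V}$, the subgroup $N \cap H$ is normal in $H$ and $H/(N \cap H)$ embeds into $G/N$, hence lies in $\mathbf{V}$ since pseudovarieties are closed under subgroups. Thus the subspace topology on $H$ induced from the pro-$\mathbf{V}$ topology on $G$ is coarser than the pro-$\mathbf{V}$ topology on $H$. For the reverse inclusion, start with $M \nsubeq H$ satisfying $H/M \in \mathbf{V}$ and define $\phi : G \to H/M$ by extending the quotient map $H \to H/M$ and sending every element of $K$ to the identity. Setting $N := \ker\phi$, we see $\phi$ is surjective (because $\phi|_H$ is), so $G/N \cong H/M \in \mathbf{V}$, while $N \cap H = \ker(\phi|_H) = M$. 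Hence every basic pro-$\mathbf{V}$ neighborhood of the identity in $H$ is the restriction of a pro-$\mathbf{V}$ neighborhood of the identity in $G$, and the two topologies on $H$ agree.

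The only conceptual step is the extension construction, which is exactly what the free-factor hypothesis buys us; without it, some $M \nsubeq H$ with $H/M \in \mathbf{V}$ might fail to be cut out by any pro-$\mathbf{V}$ open normal subgroup of $G$. There is no serious obstacle beyond recognizing this. Notably, the argument uses only closure of $\mathbf{V}$ under subgroups and not any of the extension-closure hypotheses invoked elsewhere in the paper.
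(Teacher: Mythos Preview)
Your proof is correct and follows essentially the same approach as the paper: one direction is the formal observation that $H/(H\cap N)\hookrightarrow G/N$, and for the other you extend the quotient $H\to H/M$ across the free factor by killing the complement. The paper phrases this second step via the semidirect product decomposition $G=H\ltimes K$ (with $K$ the normal closure of the complementary free factor) and identifies your kernel explicitly as $KM$, but the content is identical.
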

    \begin{proof}
        The pro-$\mathbf{V}$ topology on $H$ is in general finer than the one induced from $G$, since, in the notation of the proof of Lemma \ref{lem:closed}, if $N\in\mc{N}$, then the natural map $H/(H\cap N)\to G/N\in\mathbf{V}$ is injective, so $H/(H\cap N)\in\mathbf{V}$.

        Conversely, note that if $G=H\ast K_0$, then $G=H\semiact K$ where $K$ is the normal closure of $K_0$ in $G$. Now take $N\nsubeq H$ such that $H/N\in\mathbf{V}$ and note that $G/KN\iso H/N\in\mathbf{V}$, so $KN$ is open in the pro-$\mathbf{V}$ topology of $G$. Observe that $N=H\cap KN$, so $N$ is open in the induced topology, as desired.
    \end{proof}

    \begin{notation*}
        In view of the Lemmas \ref{lem:closed}(3) and \ref{lem:free_factor}, we fix a free group $F$ generated by some fixed finite set $X$, and other than subgroups of $F$, we make the convention throughout the entire note that all groups are \textit{finite} and generated by $X$. For a \textit{word} $w\in(X\sqcup X^{-1})^\ast$ and a group $G$, this allows to define the \textit{value} of $w$ in $G$ as the image of $w$ under the canonical map $(X\sqcup X^{-1})^\ast\onto F\onto G$, denoted $[w]_G$.

        Graphs are defined following Serre \cite{Ser80}, except that we only consider finite graphs, and we label each edge $e$ by a letter $\ell(e)\in X\sqcup X^{-1}$; if $\ell(e)\in X$, then $e$ is said to be \textit{positively-oriented}. For an edge $e$, let $\alpha(e)$ and $\omega(e)$ denote its initial and terminal vertices, respectively, and let $\bar{e}$ be its reverse edge; the same notations apply to paths as well. If $G$ is a group, any path $\gamma$ in a graph induces an element $[\gamma]_G\coloneqq[\ell(\gamma)]_G\in G$, which gives rise to reduced paths in the Cayley graph $\mc{C}(G)$ with respect to $X$, starting at a given vertex.

        By the \textit{rose}, we mean the graph with a single vertex and $|X|$ loops, each labelled by a distinct element in $X$. A map $f$ of graphs is an \textit{immersion} (resp. \textit{covering}) if, for each vertex $v$ of its domain, $f$ restricts to an injection (resp. bijection) on the \textit{star} $\l\{e\st\alpha(e)=v\r\}$ of $v$. We say that a graph $\Gamma$ is an \textit{immersion}/\textit{covering} if the natural map from $\Gamma$ to the rose is such.
    \end{notation*}

    \subsection{The Stallings graph}

    Let $H$ be a finitely generated subgroup of $F$. We associate to $H$ its \textit{Stallings graph} $\mc{S}(H)$, which is essentially a connected graph with fundamental group $H$ that expands to a covering. To see why such a graph is useful, let us use it to give a short proof of Hall's Theorem.

    \begin{theorem}[\cite{Hal49}]
        Every finitely generated subgroup of a free group is closed in its profinite topology.
    \end{theorem}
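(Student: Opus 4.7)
The plan is to verify Lemma~\ref{lem:closed}(2) directly: for each $g \in F \setminus H$, I will construct a finite-index subgroup $K \leq F$ with $H \subseteq K$ and $g \notin K$, which in the profinite case is automatically open (Lemma~\ref{lem:open}). I realise $K$ as the fundamental group at $v_0$ of a finite connected covering $\widetilde{\Gamma}$ of the rose $R_X$, built from the Stallings graph of $H$.

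First, I construct $\mathcal{S}(H)$ by starting from a based bouquet whose petals spell out a fixed finite free generating set of $H$ letter by letter, and iteratively performing Stallings foldings --- identifying any two edges sharing a vertex and a label --- until none remain. This terminates in a finite connected $X$-labeled graph with basepoint $v_0$ whose natural map to $R_X$ is an immersion, and a standard induction on the folding procedure shows that $\pi_1(\mathcal{S}(H), v_0) = H$ inside $F = \pi_1(R_X)$. Equivalently, a reduced word $h$ lies in $H$ iff the unique path reading $h$ at $v_0$ exists in $\mathcal{S}(H)$ and closes up at $v_0$.

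Now fix a reduced word $g = x_1 \cdots x_n$ with $g \notin H$, and let $k \leq n$ be maximal such that the first $k$ letters of $g$ can be read at $v_0$ in $\mathcal{S}(H)$, ending at some vertex $v_k$. If $k < n$, I append fresh vertices $v_{k+1}, \ldots, v_n$ together with edges $v_j \to v_{j+1}$ labeled $x_{j+1}$ for $k \leq j < n$; maximality of $k$ rules out any conflict at $v_k$, and the appended vertices are new, so the augmented graph $\Gamma$ remains a finite immersion into $R_X$. By construction, the $g$-path at $v_0$ in $\Gamma$ ends at $v_n \neq v_0$ (either freshly introduced, or, when $k = n$, already distinct from $v_0$ since $g \notin H$). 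I then complete $\Gamma$ to a finite covering $\widetilde{\Gamma}$ of $R_X$ by the standard matching trick: for each $x \in X$, the vertices missing an outgoing $x$-edge and those missing an incoming $x$-edge have equal cardinality (both equal to $|V(\Gamma)|$ minus the number of existing $x$-edges), so I pair them arbitrarily and insert the corresponding edges.

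Passing to the component of $v_0$ in $\widetilde{\Gamma}$ gives a finite connected covering of $R_X$, so $K \coloneqq \pi_1(\widetilde{\Gamma}, v_0)$ is a finite-index subgroup of $F$ containing $H$. By uniqueness of path lifting in a covering, the $g$-path at $v_0$ in $\widetilde{\Gamma}$ follows only the edges already present in $\Gamma$ and so ends at $v_n \neq v_0$, whence $g \notin K$. I expect the main conceptual hurdle to lie in verifying carefully that foldings preserve the fundamental group --- so that $\pi_1(\mathcal{S}(H), v_0) = H$ --- since this underpins the entire argument; the appending and matching steps are elementary once immersions are viewed as partial-bijection data on $V(\Gamma)$.
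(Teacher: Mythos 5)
Your proposal is correct and follows essentially the same route as the paper: build $\mathcal{S}(H)$ by foldings, attach the unreadable part of $g$ as a hanging path, and expand the resulting immersion to a finite cover of the rose in which the $g$-path still fails to close up. The only cosmetic difference is that you conclude via the finite-index subgroup $K=\pi_1(\widetilde{\Gamma},v_0)$ and Lemma~\ref{lem:closed}(2), whereas the paper passes to the deck transformation group $F/\pi_1(\mathcal{S}(H)^+_w)_F$ and uses Lemma~\ref{lem:closed}(3); these are equivalent by that very lemma.
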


    To show that $H$ is closed in the profinite topology of $F$, it suffices to find, for each word $w\not\in H$, a finite index normal subgroup $N\nsubeq F$ such that $Nw\cap H=\e$. Equivalently, it suffices to find a finite group $K$ such that $[w]_K\not\in[H]_K$. These two groups are related by $K\iso F/N$; see the proof of Lemma \ref{lem:closed}.

    It is known (see, for instance, \cite{Ser80}) that every finite index subgroup of $F$ is a free group that can be represented as the fundamental group of a covering. Thus, to construct $N$, we can start with $\mc{S}(H)$, attach to it a path $\gamma_w$ labelled $w$, and expand it to a cover $\mc{S}(H)^+$ in such a way that $\gamma_w$ does not collapse to a loop. The deck transformations of $\mc{S}(H)^+$ is a group $K\iso F/\pi_1(\mc{S}(H)^+)_F$, so $N\coloneqq\pi_1(\mc{S}(H)^+)_F$ is as desired.

    Let us now formalize these arguments and the construction of $\mc{S}(H)$; see Figures \ref{fig:fold} and \ref{fig:example_stallings} for reference.

    \begin{figure}
        \center
        \begin{tikzpicture}
            \fill (0,0) circle (0.02in);
            \fill (2,0.5) circle (0.02in);
            \fill (2,-0.5) circle (0.02in);

            \draw[->-=0.52] (0,0) -- node[above] {\small$e_1$} (2, 0.5);
            \draw[->-=0.52] (0,0) -- node[below] {\small$e_2$} (2,-0.5);

            \fill (2.5,0) circle (0) node {\small or};

            \fill (5,0) circle (0.02in);
            \fill (3,0.5) circle (0.02in);
            \fill (3,-0.5) circle (0.02in);

            \draw[->-=0.52] (3, 0.5) -- node[above] {\small$e_1$} (5,0);
            \draw[->-=0.52] (3,-0.5) -- node[below] {\small$e_2$} (5,0);

            \draw[->, decorate, decoration = {zigzag, segment length=4, amplitude=.9,post=lineto, post length=2pt}] (5.74,0) -- node[above] {\small Fold} (7.25,0);

            \begin{scope}[xshift=8cm]
                \fill (0,0) circle (0.02in);
                \fill (2,0) circle (0.02in);
                \draw[->-=0.53] (0,0) -- node[above] {\small$e_1\sim e_2$} (2,0);
            \end{scope}
        \end{tikzpicture}           
        \begin{minipage}{0.95\textwidth}
            \vspace{0.1in}
            \caption{Folding an admissible pair $(e_1,e_2)$ common initial vertices. In the second case, observe that $(\bar{e_1},\bar{e_2})$ is an admissible pair, so we can also fold their initial vertices together.}
            \label{fig:fold}
            \vspace{-0.1in}
        \end{minipage}
    \end{figure}
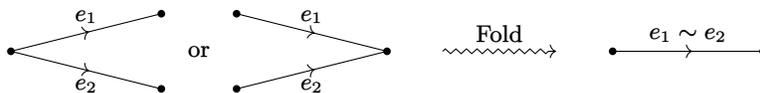

    \begin{figure}
        \center
        \begin{tikzpicture}
            \def\loopAngle{10}
            \def\loopTightness{0.1}

            \draw[->-=0.52, blue] (0,0) -- node[below] {\footnotesize$y$} (1,-0.5);
            \draw[->-=0.52, blue] (0,0) -- node[below] {\footnotesize$y$} (-1,-0.5);

            \fill (0,0) circle (0.02in) node[below] {\small$v$};
            \fill (2,0) circle (0.02in);
            \fill ( 1, 0.5) circle (0.02in);
            \fill ( 1,-0.5) circle (0.02in);
            \fill (-1, 0.5) circle (0.02in);
            \fill (-1,-0.5) circle (0.02in);

            \draw[->-=0.52] (0,0) -- node[above] {\footnotesize$x$} (1,0.5);
            \draw[->-=0.52] (1,0.5) -- node[above] {\footnotesize$y$} (2,0);
            \draw[->-=0.52] (1,-0.5) -- node[below] {\footnotesize$x$} (2,0);
            \draw[->-=0.52] (0,0) -- node[above] {\footnotesize$y$} (-1,0.5);
            \draw[->-=0.52] (-1,-0.5) -- node[left] {\footnotesize$x$} (-1,0.5);

            \draw[->, decorate, decoration = {zigzag, segment length=4, amplitude=.9,post=lineto, post length=2pt}] (2.75,0) -- node[above] {\small Fold \#1} (4.25,0);

            \begin{scope}[xshift=5cm]
                \draw[->-=0.52, blue] (1,-0.5) -- node[below] {\footnotesize$x$} (2,0);
                \draw[->-=0.55, blue] (1,-0.5) -- node[below] {\footnotesize$x$} (0,-1);

                \fill (0,0) circle (0.02in) node[left]{\small$v$};
                \fill (2,0) circle (0.02in);
                \fill (1, 0.5) circle (0.02in);
                \fill (1,-0.5) circle (0.02in);
                \fill (0,-1) circle (0.02in);

                \draw[->-=0.52] (0,0) -- node[above] {\footnotesize$x$} (1,0.5);
                \draw[->-=0.52, thick] (0,0) -- node[below] {\footnotesize$y$} (1,-0.5);
                \draw[->-=0.52] (1, 0.5) -- node[above] {\footnotesize$y$} (2,0);
                \draw[->-=0.6] (0,0) -- node[left] {\footnotesize$y$} (0,-1);

                \draw[->, decorate, decoration = {zigzag, segment length=4, amplitude=.9,post=lineto, post length=2pt}] (2.75,0) -- node[above] {\small Fold \#2} (4.25,0);

                \begin{scope}[xshift=5cm]
                    \draw[->-=0.52, blue] (0,0) -- node[below] {\footnotesize$y$} (1,-0.5);
                    \draw[->-=0.52, blue] (0,0) -- node[above] {\footnotesize$y$} (2,0);

                    \fill (0,0) circle (0.02in) node[left] {\small$v$};
                    \fill (2,0) circle (0.02in);
                    \fill (1, 0.5) circle (0.02in);
                    \fill (1,-0.5) circle (0.02in);

                    \draw[->-=0.52] (0,0) -- node[above] {\footnotesize$x$} (1,0.5);
                    \draw[->-=0.52] (1, 0.5) -- node[above] {\footnotesize$y$} (2,0);
                    \draw[->-=0.52, thick] (1,-0.5) -- node[below] {\footnotesize$x$} (2,0);
                \end{scope}
            \end{scope}

            \coordinate (s) at (11,-0.75);
            \coordinate (e) at (0,-1.75);

            \draw[->, decorate, decoration = {zigzag, segment length=5, amplitude=.9,post=lineto, post length=2pt}] (s.south) .. controls (11,-3) and (0,0) .. (e.north);
            \node[below, rotate=-3] at (5.5,-1.45) {\small Fold \#3};

            \begin{scope}[xshift=-1cm, yshift=-2.5cm]
                \draw[->-=0.52, blue, thick] (0,0) -- node[above] {\footnotesize$y$} (2,0);
                \draw[->-=0.52, blue] (1, 0.5) -- node[above] {\footnotesize$y$} (2,0);

                \fill (0,0) circle (0.02in) node[left] {\small$v$};
                \fill (2,0) circle (0.02in);
                \fill (1, 0.5) circle (0.02in);

                \draw[->-=0.52] (0,0) -- node[above] {\footnotesize$x$} (1,0.5);

                \draw (2,0) to[out=-\loopAngle, in=-90] (2.75,-\loopTightness);
                \draw[->-=0.8] (2.75,-\loopTightness) -- node[right] {\footnotesize$x$} (2.75,\loopTightness);
                \draw (2.75,\loopTightness) to[out=90, in=\loopAngle] (2,0);

                \draw[->, decorate, decoration = {zigzag, segment length=4, amplitude=.9,post=lineto, post length=2pt}] (3.75,0) -- node[above]{\small Fold \#4} (5.25,0);

                \begin{scope}[xshift=6cm]
                    \fill (1,0) circle (0.02in) node[below] {\small$v$};
                    \fill (2,0) circle (0.02in);

                    \draw[->-=0.55, thick] (1,0) -- node[above] {\footnotesize$y$} (2,0);

                    \draw (2,0) to[out=-\loopAngle, in=-90] (2.75,-\loopTightness);
                    \draw[->-=0.8] (2.75,-\loopTightness) -- node[right] {\footnotesize$x$} (2.75,\loopTightness);
                    \draw (2.75,\loopTightness) to[out=90, in=\loopAngle] (2,0);

                    \draw (1,0) to[out=180+\loopAngle, in=-90] (0.25,-\loopTightness);
                    \draw[->-=0.8] (0.25,-\loopTightness) -- node[left] {\footnotesize$x$} (0.25,\loopTightness);
                    \draw (0.25,\loopTightness) to[out=90, in=180-\loopAngle] (1,0);
                \end{scope}
            \end{scope}
        \end{tikzpicture}
        \begin{minipage}{0.95\textwidth}
            \vspace{0.1in}
            \caption{The construction of $\mc{S}(H)$ for $H\coloneqq\langle xyx^{-1}y^{-1},yxy^{-1}\rangle$: starting from the top-left corner, the blue edges are the admissible edges to be folded in the next step, which results in a thicker edge. Note that the last fold requires us to identify edges with a common terminal vertex instead.}
            \label{fig:example_stallings}
            \vspace{-0.2in}
        \end{minipage}
    \end{figure}
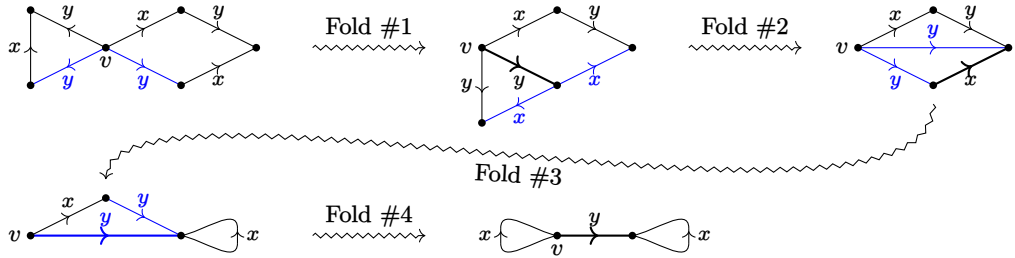

    \begin{definition}\label{def:fold}
        Let $\Gamma$ be a graph. A pair $(e_1,e_2)$ of edges in $\Gamma$ is said to be \textit{admissible} if $e_2\neq\bar{e}_1$, $\alpha(e_1)=\alpha(e_2)$, and $\ell(e_1)=\ell(e_2)$. For any such pair, we can \textit{fold} $\Gamma$ by identifying $e_1\sim e_2$, $\bar{e_1}\sim\bar{e_2}$, and $\omega(e_1)\sim\omega(e_2)$.

        The \textit{Stallings graph} of a finitely generated subgroup $H\subseq F$ is the graph $(\mc{S}(H),v)$, constructed as follows.
        \begin{enumerate}
            \item Write $H=\gen{w_1,\dots,w_m}$ for reduced words $w_i$, and consider the wedge of $m$ disjoint cycles $C_1,\dots,C_m$ with basepoint $v$, such that the length of $C_i$ is $|w_i|$. Starting from $v$, we label each $C_i$ by the word $w_i$.
                \vspace{-0.05in}
            \item Denote the resulting graph by $\Gamma$, which we then repeatedly fold until there are no admissible pairs of edges; since $\Gamma$ is finite, this process terminates. This yields the graph $\mc{S}(H)$.
        \end{enumerate}
    \end{definition}

    Since we have folded all admissible pairs of edges, there is an natural map from $\mc{S}(H)$ to the rose, which is injective on stars. Thus, $\mc{S}(H)$ is an immersion. The next two lemmas show that $\pi_1(\mc{S}(H),v)\iso H$ and that $\mc{S}(H)$ can be expanded to a cover. Since we shall work with other immersions too, we state them abstractly.

    \begin{lemma}\label{lem:folding}
        Suppose that $\Gamma$ is a graph in which every admissible pair of edges have distinct terminal vertices. Let $\Gamma'$ be the graph obtained by folding all admissible pairs in $\Gamma$. Then $\pi_1\Gamma\iso\pi_1\Gamma'$ and $\Gamma'$ is an immersion.
    \end{lemma}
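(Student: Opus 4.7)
The plan is to process the admissible pairs one at a time: perform a single fold, argue that it is a homotopy equivalence of the underlying topological graphs, and iterate. Because each fold identifies two distinct edges into one, every step strictly decreases the edge count of a finite graph, so the process must terminate. At termination, the resulting graph $\Gamma'$ admits no admissible pairs, which is precisely the assertion that the natural map to the rose is injective on stars; hence $\Gamma'$ is an immersion by definition.

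For a single fold of an admissible pair $(e_1,e_2)$ with $\omega(e_1)\neq\omega(e_2)$, I would focus on the subcomplex $T\coloneqq e_1\cup e_2\sub\Gamma$. By the hypothesis on terminal vertices, $T$ consists of two edges meeting only at their common initial vertex $\alpha(e_1)=\alpha(e_2)$, together with the three distinct vertices $\alpha(e_1),\omega(e_1),\omega(e_2)$; it is thus a tree and is contractible. The fold map sends $T$ onto a single edge $T'\sub\Gamma_{\text{fold}}$, also contractible, and is the identity on $\Gamma\comp T$. I would then factor the fold as $\Gamma\onto\Gamma/T\iso\Gamma_{\text{fold}}/T'\hookleftarrow\Gamma_{\text{fold}}$, where the two outer maps collapse contractible subcomplexes (and are therefore homotopy equivalences) while the middle map is a homeomorphism. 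This forces the fold itself to be a homotopy equivalence, so in particular $\pi_1$ is preserved by each step.

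The main difficulty, and the crux of the argument, is to justify that the hypothesis on terminal vertices propagates through the iteration, so that every successive fold is again a homotopy equivalence. Indeed, folding $(e_1,e_2)$ identifies $\omega(e_1)$ with $\omega(e_2)$, which can in principle create a new admissible pair $(f_1,f_2)$ in the quotient whose terminal vertices coincide solely because of this identification. One would need to show that any such pair lifts back to a configuration in $\Gamma$ already precluded by the original hypothesis, so that the iteration never has to fold an admissible pair with coinciding terminal vertices; this is the delicate combinatorial bookkeeping that the rest of the proof must carry out.
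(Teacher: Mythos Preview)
Your single-fold step (the wedge $T=e_1\cup e_2$ is a tree, collapsing it is a homotopy equivalence) and the termination argument match the paper's proof exactly; the paper's proof is one sentence and does not mention propagation at all.

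You are right to isolate propagation as the crux, but the bookkeeping you anticipate cannot succeed: the hypothesis does \emph{not} propagate, and the lemma as literally stated is false. Let $\Gamma$ be the $4$-cycle on vertices $v,a,c,b$ with edges $e_1\colon v\to a$, $e_2\colon v\to b$ labelled $x$ and $f_1\colon a\to c$, $f_2\colon b\to c$ labelled $y$. The only admissible pairs in $\Gamma$ are $(e_1,e_2)$ and $(\bar f_1,\bar f_2)$, both with terminal set $\{a,b\}$, so the hypothesis holds. Folding $(e_1,e_2)$ merges $a$ with $b$; now $(f_1,f_2)$ is admissible with \emph{common} terminal vertex $c$, and the second fold collapses a bigon. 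The fully folded $\Gamma'$ is a path, so $\pi_1\Gamma\cong\Z$ while $\pi_1\Gamma'=1$. The same phenomenon already arises in the Stallings construction whenever the chosen generators are not a free basis, e.g.\ $w_1=xy$ and $w_2=(xy)^{-1}$. What \emph{is} preserved by every fold, and what the paper actually uses downstream, is the image of $\pi_1$ in $F$ under the labelling map to the rose: any fold is $\pi_1$-surjective, and since the labelling factors through it, that image is unchanged. So your instinct that something must be justified is sound, but the remedy is to replace the conclusion by this labelled version rather than to attempt the propagation argument you outline.
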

    \begin{proof}
        Our hypothesis ensures that folding an admissible pair induces a homotopy equivalence between the geometric realizations of the two graphs, so $\pi_1\Gamma\iso\pi_1\Gamma'$; see Figure \ref{fig:fold}. That $\Gamma'$ is an immersion is clear.
    \end{proof}

    \begin{lemma}\label{lem:expand}
        Every immersion expands to a covering without adding new vertices.
    \end{lemma}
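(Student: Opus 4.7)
The plan is to complete the immersion to a covering purely by adding edges, matching up the ``missing'' half-edges at vertices in the only combinatorially available way.

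First I would set up the local deficiency: since $\Gamma \to \text{rose}$ is an immersion, at each vertex $v$ the star $\{e : \alpha(e) = v\}$ contains \emph{at most one} edge labelled by each $\ell \in X \sqcup X^{-1}$, whereas for a covering we need \emph{exactly one}. For each letter $x \in X$, let $A_x$ be the set of vertices of $\Gamma$ whose star contains no edge labelled $x$, and let $B_x$ be the set of vertices whose star contains no edge labelled $x^{-1}$ (equivalently, no incoming positively-oriented $x$-edge). The key counting observation is that $|A_x| = |B_x|$: both equal $|V(\Gamma)|$ minus the number of positively-oriented $x$-labelled edges in $\Gamma$, since each such edge contributes one outgoing $x$ at its initial vertex and one outgoing $x^{-1}$ at its terminal vertex, and by the immersion hypothesis these contributions are without repetition within any star.

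Next I would use this equality to add edges. Fix, for each $x \in X$, an arbitrary bijection $\sigma_x : A_x \to B_x$, and for each $v \in A_x$ adjoin a new positively-oriented edge $e_{v,x}$ with $\alpha(e_{v,x}) = v$, $\omega(e_{v,x}) = \sigma_x(v)$, $\ell(e_{v,x}) = x$, together with its reverse $\bar{e}_{v,x}$ (which is labelled $x^{-1}$ and goes from $\sigma_x(v)$ to $v$). No new vertices are introduced, so the vertex set is preserved as required.

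Finally I would verify that the resulting graph $\Gamma^+$ is a covering of the rose. By construction, for each vertex $v$ and each $x \in X$, exactly one $x$-labelled edge now starts at $v$: either the original one (if $v \notin A_x$) or the newly added $e_{v,x}$ (if $v \in A_x$). Similarly, exactly one $x^{-1}$-labelled edge starts at $v$, using the bijection $\sigma_x$ to account for the reverses. Thus the natural map $\Gamma^+ \to \text{rose}$ restricts to a bijection on every star, which is precisely the definition of a covering. The only real content is the counting identity $|A_x| = |B_x|$; everything else is bookkeeping, so I don't expect any genuine obstacle.
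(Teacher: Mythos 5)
Your argument is correct and is essentially the same as the paper's: the paper phrases it as extending the injective partial function $f_x\colon v\mapsto vx$ to a permutation of the vertex set, and your explicit construction of the bijection $\sigma_x\colon A_x\to B_x$ (with the counting identity $|A_x|=|B_x|$) is precisely the verification that such an extension exists.
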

    \begin{proof}
        Let $\Gamma$ be an immersion with vertex set $V$. For each $x\in X$, there is a partial function $f_x:V\parto V$ sending $v\mapsto vx$, where $vx$ is the terminal vertex of an edge in the star of $v$ labelled by $x$, if it exists. Since $\Gamma$ is an immersion, these partial functions are all injective, and hence can all be extended to permutations of $V$. Adding edges to $\Gamma$ in accordance to these permutations then furnishes a cover, which is as desired.
    \end{proof}

    \begin{remark}
        For an immersion $\Gamma$, let $\Gamma^+$ denote any covering expansion of $\Gamma$ obtained as in the above lemma; since no vertices are added, the index of $\pi_1(\mc{S}(H)^+)$ in $F$ is the same as that of $H$. There are usually many such expansions (see Figure \ref{fig:example_expand}), and we will sometimes need to choose a specific one (see Lemma \ref{lem:deck}).
    \end{remark}

    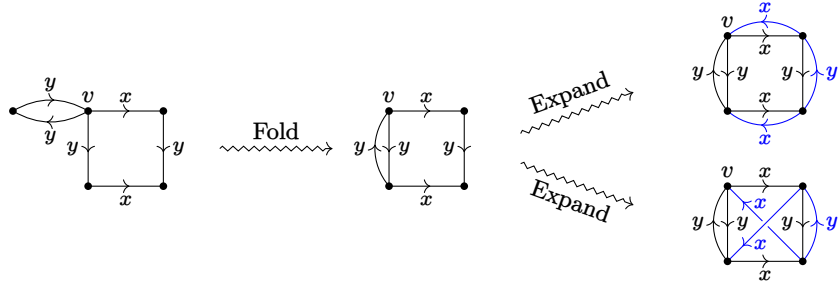
\begin{figure}
        \center
        \begin{center}
            \begin{tikzpicture}
                \fill (0,0) circle (0.02in) node[above] {\small$v$};
                \fill (1,0) circle (0.02in);
                \fill (1,-1) circle (0.02in);
                \fill (0,-1) circle (0.02in);
                \fill (-1,0) circle (0.02in);

                \draw[->-=0.55] (0,0) -- node[above] {\footnotesize$x$} (1,0);
                \draw[->-=0.55] (1,0) -- node[right] {\footnotesize$y$} (1,-1);
                \draw[->-=0.55] (0,-1) -- node[below] {\footnotesize$x$} (1,-1);
                \draw[->-=0.55] (0,0) -- node[left] {\footnotesize$y$} (0,-1);
                \draw[->-=0.55] (0,0) to[bend left] node[below] {\footnotesize$y$} (-1,0);
                \draw[->-=0.55] (-1,0) to[bend left] node[above] {\footnotesize$y$} (0,0);

                \draw[->, decorate, decoration = {zigzag, segment length=4, amplitude=.9,post=lineto, post length=2pt}] (1.75,-0.5) -- node[above]{\small Fold} (3.25,-0.5);

                \begin{scope}[xshift=4cm]
                    \fill (0,0) circle (0.02in) node[above] {\small$v$};
                    \fill (1,0) circle (0.02in);
                    \fill (1,-1) circle (0.02in);
                    \fill (0,-1) circle (0.02in);

                    \draw[->-=0.55] (0,0) -- node[above] {\footnotesize$x$} (1,0);
                    \draw[->-=0.55] (1,0) -- node[left] {\footnotesize$y$} (1,-1);
                    \draw[->-=0.55] (0,-1) -- node[below] {\footnotesize$x$} (1,-1);
                    \draw[->-=0.55] (0,0) to node[right] {\footnotesize$y$} (0,-1);
                    \draw[->-=0.55] (0,-1) to[bend left=40] node[left] {\footnotesize$y$} (0,0);

                    \draw[->, decorate, decoration = {zigzag, segment length=4, amplitude=.9,post=lineto, post length=2pt}] (1.75,-0.5+0.2) -- node[above, rotate=20]{\small Expand} (3.25,-0.5+0.75);
                    \draw[->, decorate, decoration = {zigzag, segment length=4, amplitude=.9,post=lineto, post length=2pt}] (1.75,-0.5-0.2) -- node[below, rotate=-20]{\small Expand} (3.25,-0.5-0.75);

                    \begin{scope}[xshift=4.5cm, yshift=1cm]
                        \draw[->-=0.55, blue] (1,0) to[bend right=40] node[above] {\footnotesize$x$} (0,0);
                        \draw[->-=0.55, blue] (1,-1) to[bend right=40] node[right] {\footnotesize$y$} (1,0);
                        \draw[->-=0.55, blue] (1,-1) to[bend left=40] node[below] {\footnotesize$x$} (0,-1);

                        \fill (0,0) circle (0.02in) node[above] {\small$v$};
                        \fill (1,0) circle (0.02in);
                        \fill (1,-1) circle (0.02in);
                        \fill (0,-1) circle (0.02in);

                        \draw[->-=0.55] (0,0) -- node[below] {\footnotesize$x$} (1,0);
                        \draw[->-=0.55] (1,0) -- node[left] {\footnotesize$y$} (1,-1);
                        \draw[->-=0.55] (0,-1) -- node[above] {\footnotesize$x$} (1,-1);
                        \draw[->-=0.55] (0,0) to node[right] {\footnotesize$y$} (0,-1);

                        \draw[->-=0.55] (0,-1) to[bend left=40] node[left] {\footnotesize$y$} (0,0);
                    \end{scope}

                    \begin{scope}[xshift=4.5cm, yshift=-1cm]
                        \draw[->-=0.55, blue] (1,-1) to[bend right=40] node[right] {\footnotesize$y$} (1,0);

                        \draw[->-=0.8, blue] (1,-1) -- node[xshift=-0.06cm, yshift=0.27cm] {\footnotesize$x$} (0,0);
                        \fill[white] (0.5,-0.5) circle (0.02in);
                        \draw[->-=0.8, blue] (1,0) -- node[xshift=-0.06cm, yshift=-0.27cm] {\footnotesize$x$} (0,-1);

                        \fill (0,0) circle (0.02in) node[above] {\small$v$};
                        \fill (1,0) circle (0.02in);
                        \fill (1,-1) circle (0.02in);
                        \fill (0,-1) circle (0.02in);

                        \draw[->-=0.55] (0,0) -- node[above] {\footnotesize$x$} (1,0);
                        \draw[->-=0.55] (1,0) -- node[left] {\footnotesize$y$} (1,-1);
                        \draw[->-=0.55] (0,-1) -- node[below] {\footnotesize$x$} (1,-1);
                        \draw[->-=0.55] (0,0) to node[right] {\footnotesize$y$} (0,-1);

                        \draw[->-=0.55] (0,-1) to[bend left=40] node[left] {\footnotesize$y$} (0,0);
                    \end{scope}
                \end{scope}
            \end{tikzpicture}
            \begin{minipage}{0.95\textwidth}
                \vspace{0.1in}
                \caption{The Stallings graph $\mc{S}(\langle xyx^{-1}y^{-1},y^2\rangle)$ admit two different extensions to covers of the rose.}
                \label{fig:example_expand}
                \vspace{-0.1in}
            \end{minipage}
        \end{center}
    \end{figure}

    Using these lemmas, we can now tie up the loose ends of the above sketch of Hall's Theorem: let $\mc{S}(H)_w$ be the graph obtained by attaching to $\mc{S}(H)$ a path $\gamma_w$ labelled $w$, where $\omega(\gamma_w)=v$ and $\alpha(\gamma_w)\not\in\mc{S}(H)$, which we then fold; see Figure \ref{fig:example_attach}. The fundamental group of $\mc{S}(H)_w$ remains $H$, and as $\mc{S}(H)_w$ is an immersion, we can expand it to a cover $\mc{S}(H)^+_w$ of the rose. Since $w\not\in H$, the path $\gamma_w$ is not a loop in $\mc{S}(H)_w$, so it is also not a loop in $\mc{S}(H)^+_w$. Thus the deck transformation of $\mc{S}(H)_w^+$ induced by $w^{-1}$ translates $v=\omega(\gamma_w)$ to $\alpha(\gamma_w)\neq v$, and this is not the case for those induced by any $h\in H$, so $w^{-1}$, and hence $w$, is separated from $H$ in the quotient. This ends the proof of Hall's Theorem.

    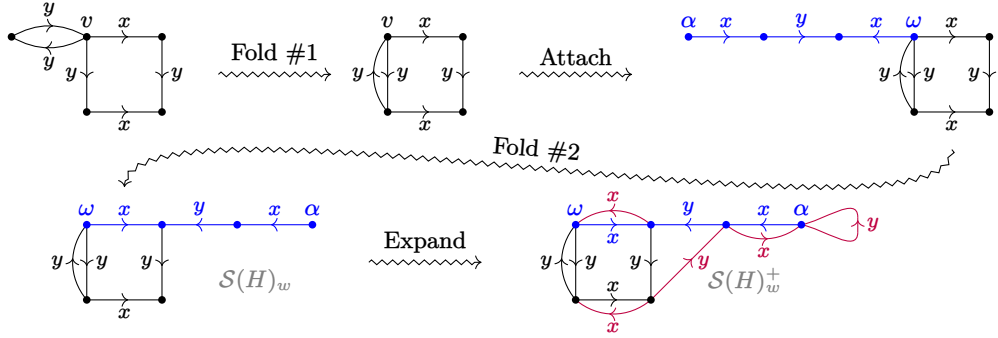
\begin{figure}
        \center
        \begin{center}
            \begin{tikzpicture}
                \def\loopAngle{10}
                \def\loopTightness{0.1}

                \fill (0,0) circle (0.02in) node[above] {\small$v$};
                \fill (1,0) circle (0.02in);
                \fill (1,-1) circle (0.02in);
                \fill (0,-1) circle (0.02in);
                \fill (-1,0) circle (0.02in);

                \draw[->-=0.55] (0,0) -- node[above] {\footnotesize$x$} (1,0);
                \draw[->-=0.55] (1,0) -- node[right] {\footnotesize$y$} (1,-1);
                \draw[->-=0.55] (0,-1) -- node[below] {\footnotesize$x$} (1,-1);
                \draw[->-=0.55] (0,0) -- node[left] {\footnotesize$y$} (0,-1);
                \draw[->-=0.55] (0,0) to[bend left] node[below] {\footnotesize$y$} (-1,0);
                \draw[->-=0.55] (-1,0) to[bend left] node[above] {\footnotesize$y$} (0,0);

                \draw[->, decorate, decoration = {zigzag, segment length=4, amplitude=.9,post=lineto, post length=2pt}] (1.75,-0.5) -- node[above]{\small Fold \#1} (3.25,-0.5);

                \begin{scope}[xshift=4cm]
                    \fill (0,0) circle (0.02in) node[above] {\small$v$};
                    \fill (1,0) circle (0.02in);
                    \fill (1,-1) circle (0.02in);
                    \fill (0,-1) circle (0.02in);

                    \draw[->-=0.55] (0,0) -- node[above] {\footnotesize$x$} (1,0);
                    \draw[->-=0.55] (1,0) -- node[left] {\footnotesize$y$} (1,-1);
                    \draw[->-=0.55] (0,-1) -- node[below] {\footnotesize$x$} (1,-1);
                    \draw[->-=0.55] (0,0) to node[right] {\footnotesize$y$} (0,-1);
                    \draw[->-=0.55] (0,-1) to[bend left=40] node[left] {\footnotesize$y$} (0,0);

                    \draw[->, decorate, decoration = {zigzag, segment length=4, amplitude=.9,post=lineto, post length=2pt}] (1.75,-0.5) -- node[above] {\small Attach} (3.25,-0.5);

                    \begin{scope}[xshift=7cm]
                        \fill (1,0) circle (0.02in);
                        \fill (1,-1) circle (0.02in);
                        \fill (0,-1) circle (0.02in);

                        \draw[->-=0.55] (0,0) -- node[above] {\footnotesize$x$} (1,0);
                        \draw[->-=0.55] (1,0) -- node[left] {\footnotesize$y$} (1,-1);
                        \draw[->-=0.55] (0,-1) -- node[below] {\footnotesize$x$} (1,-1);
                        \draw[->-=0.55] (0,0) to node[right] {\footnotesize$y$} (0,-1);
                        \draw[->-=0.55] (0,-1) to[bend left=40] node[left] {\footnotesize$y$} (0,0);

                        \draw[->-=0.55, blue] (-3,0) -- node[above] {\footnotesize$x$} (-2,0);
                        \draw[->-=0.55, blue] (-2,0) -- node[above] {\footnotesize$y$} (-1,0);
                        \draw[->-=0.55, blue] (0,0) -- node[above] {\footnotesize$x$} (-1,0);

                        \fill[blue] (-3,0) circle (0.02in) node[above] {\small$\alpha$};
                        \fill[blue] (-2,0) circle (0.02in);
                        \fill[blue] (-1,0) circle (0.02in);
                        \fill[blue] (0,0) circle (0.02in) node[above] {\small$\omega$};
                    \end{scope}
                \end{scope}

                \coordinate (s) at (11.5,-1.5);
                \coordinate (e) at (0.5,-2);

                \draw[->, decorate, decoration = {zigzag, segment length=4.5, amplitude=.9,post=lineto, post length=2pt}] (s.south) .. controls (11.5,-3) and (0.5,-0.5) .. (e.north);
                \node[below, rotate=-5] at (6,-1.25) {\small Fold \#2};

                \begin{scope}[yshift=-2.5cm]
                    \node[gray] at (2.25,-0.75) {\small$\mc{S}(H)_w$};
                    \fill (1,-1) circle (0.02in);
                    \fill (0,-1) circle (0.02in);

                    \draw[->-=0.55] (1,0) -- node[left] {\footnotesize$y$} (1,-1);
                    \draw[->-=0.55] (0,-1) -- node[below] {\footnotesize$x$} (1,-1);
                    \draw[->-=0.55] (0,0) to node[right] {\footnotesize$y$} (0,-1);
                    \draw[->-=0.55] (0,-1) to[bend left=40] node[left] {\footnotesize$y$} (0,0);

                    \draw[->-=0.55, blue] (0,0) -- node[above] {\footnotesize$x$} (1,0);
                    \draw[->-=0.55, blue] (3,0) -- node[above] {\footnotesize$x$} (2,0);
                    \draw[->-=0.55, blue] (2,0) -- node[above] {\footnotesize$y$} (1,0);

                    \fill[blue] (3,0) circle (0.02in) node[above] {\small$\alpha$};
                    \fill[blue] (2,0) circle (0.02in);
                    \fill[blue] (1,0) circle (0.02in);
                    \fill[blue] (0,0) circle (0.02in) node[above] {\small$\omega$};

                    \draw[->, decorate, decoration = {zigzag, segment length=4, amplitude=.9,post=lineto, post length=2pt}] (3.75,-0.5) -- node[above] {\small Expand} (5.25,-0.5);

                    \begin{scope}[xshift=6.5cm]
                        \draw[->-=0.55, purple] (2,0) to[bend right=40] node[below] {\footnotesize$x$} (3,0);
                        \draw[->-=0.55, purple] (1,-1) -- node[right]{ \footnotesize$y$} (2,0);
                        \draw[->-=0.55, purple] (1,0) to[bend right=40] node[above] {\footnotesize$x$} (0,0);
                        \draw[->-=0.55, purple] (1,-1) to[bend left=40] node[below] {\footnotesize$x$} (0,-1);

                        \draw[purple] (3,0) to[out=-\loopAngle, in=-90] (3.75,-\loopTightness);
                        \draw[->-=0.8, purple] (3.75,-\loopTightness) -- node[right] {\footnotesize$y$} (3.75,\loopTightness);
                        \draw[purple] (3.75,\loopTightness) to[out=90, in=\loopAngle] (3,0);

                        \node[gray] at (2.25,-0.75) {\small$\mc{S}(H)_w^+$};
                        \fill (1,-1) circle (0.02in);
                        \fill (0,-1) circle (0.02in);

                        \draw[->-=0.55] (1,0) -- node[left] {\footnotesize$y$} (1,-1);
                        \draw[->-=0.55] (0,-1) -- node[above] {\footnotesize$x$} (1,-1);
                        \draw[->-=0.55] (0,0) to node[right] {\footnotesize$y$} (0,-1);
                        \draw[->-=0.55] (0,-1) to[bend left=40] node[left] {\footnotesize$y$} (0,0);

                        \draw[->-=0.55, blue] (0,0) -- node[below] {\footnotesize$x$} (1,0);
                        \draw[->-=0.55, blue] (3,0) -- node[above] {\footnotesize$x$} (2,0);
                        \draw[->-=0.55, blue] (2,0) -- node[above] {\footnotesize$y$} (1,0);

                        \fill[blue] (3,0) circle (0.02in) node[above] {\small$\alpha$};
                        \fill[blue] (2,0) circle (0.02in);
                        \fill[blue] (1,0) circle (0.02in);
                        \fill[blue] (0,0) circle (0.02in) node[above] {\small$\omega$};
                    \end{scope}
                \end{scope}
            \end{tikzpicture}
            \begin{minipage}{0.95\textwidth}
                \vspace{0.1in}
                \caption{For $H\coloneqq\langle xyx^{-1}y^{-1},y^2\rangle$ and the word $w\coloneqq xyx^{-1}$, we attach to $v$ the terminal point of the path $\gamma_w$ (in blue), which we then fold to obtain the immersion $\mc{S}(H)_w$.}
                \label{fig:example_attach}
                \vspace{-0.2in}
            \end{minipage}
        \end{center}
    \end{figure}

    \begin{remark}\label{rem:strategy}
        The pseudovariety $\mathbf{V}$ disappeared from our discussion, and for good reason: it is \textit{not} generally the case that a finitely generated subgroup $H$ is closed in the pro-$\mathbf{V}$ topology of $F$. For this, we need to find a group $K\in\mathbf{V}$ that separates $w$ from $H$ in the quotient, but the above construction of $K$, as the group of deck transformations of a cover, is not enough to control the structure of $K$.

        In the context of Theorem \ref{mthm}, to prove that $H_1\cdots H_n$ is closed in the pro-$\mathbf{V}$ topology of a free group, we will \textit{extend} the group $G$ of deck transformations of the covering expansions of the Stallings graphs associated to $H_1,\dots,H_n$ and $w$. These extensions must be generic enough to lie in $\mathbf{V}$, yet also strong enough to separate $w$ from $H_1\cdots H_n$. Very roughly speaking, we will use an (iterated) extension of $G$ to force the Cayley graph of $G$ to be more `tree-like', and we will do so in such a way that the endpoints of certain paths stay put.
    \end{remark}

    \subsection{Extensions by abelian $p$-groups}

    Fix\vspace{-0.023in} a prime $p$ and let $E^+$ be the set of positively-oriented edges in $\mc{C}(G)$. Let $C_pE^+\coloneqq(\Z/p\Z)^{E^+}$ be the free $\Z/p\Z$-module on $E^+$, which admits a natural action by $G$. This gives rise to a semidirect product $C_pE^+\semiacted G$, and we define the \textit{universal $p$-elementary extension}\footnote{This terminology is justified by the following universal property (although we shall not need it here): for any extension $G'$ of $G$ by an elementary abelian $p$-group, the morphism $G^{\mathbf{Ab}_p}\onto G$ factors through $G'$ \cite{Els99}.} of $G$ as
    \vspace{-0.05in}
    \begin{equation*}
        G^{\mathbf{Ab}(p)}\coloneqq\l\langle(e_x,x)\st x\in X\r\rangle\subseq C_pE^+\semiacted G,
        \vspace{-0.05in}
    \end{equation*}
    where $e_x\in E^+$ is the edge in $\mc{C}(G)$ from $1$ to $x$. Given a word $w$, $G^{\mathbf{Ab}(p)}$ not only computes its image $[w]_G$ in $G$, by virtue of $G^{\mathbf{Ab}(p)}\onto G$, but also `keeps track' of the edges that $w$ traces out in $\mc{C}(G)$ as a path from $1$ to $[w]_G$, with multiplicity mod $p$. Formally, let $w(e)$ be the number of signed traversals of $e\in E^+$ by the path $\gamma_w:1\to[w]_G$ traced out by $w$ in $\mc{C}(G)$, and let $[w(e)]_p\coloneqq w(e)$ mod $p$; an induction on $|w|$ shows that
    \vspace{-0.05in}
    \begin{equation}\tag{$\ast$}\label{eqn:magic}
        [w]_{G^{\mathbf{Ab}(p)}}=\l(\sum\nolimits_{e\in E^+}[w(e)]_pe,[w]_G\r).
        \vspace{-0.05in}
    \end{equation}
    We will show that the additional information that $G^{\mathbf{Ab}(p)}$ carries is enough to implement the strategy hinted in Remark \ref{rem:strategy} for the case $n=2$; the general case requires an iterated extension.

    Before doing so, let us first show that the relevant groups lie in $\mathbf{V}$. This is the only time where we invoke the hypotheses that $\mathbf{V}$ is closed under extensions with abelian kernel and that the subgroups $H_1,\dots,H_n\subseq F$ are closed in the pro-$\mathbf{V}$ topology of $F$. Of course, these considerations are irrelevant in the profinite case.

    \begin{lemma}\label{lem:extension}
        For any group $G\in\mathbf{V}$, there is a prime $p$ such that $G^{\mathbf{Ab}(p)}\in\mathbf{V}$.
    \end{lemma}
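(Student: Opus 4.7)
The plan is to realize $G^{\mathbf{Ab}(p)}$ as an extension of $G$ by an elementary abelian $p$-group and then invoke the closure hypothesis on $\mathbf{V}$. Everything hinges on choosing $p$ such that $\Z/p\Z\in\mathbf{V}$, and then reading off the short exact sequence from the semidirect-product construction.

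First, I would identify the extension structure. The second-coordinate projection $C_pE^+\semiacted G\onto G$ restricts to a homomorphism $\pi\colon G^{\mathbf{Ab}(p)}\onto G$, which is surjective because $\pi(e_x,x)=x$ and $G$ is generated by $X$. Its kernel $N\coloneqq\ker\pi$ is contained in $C_pE^+\times\{1\}$, so $N$ embeds as a subgroup of the elementary abelian $p$-group $C_pE^+\iso(\Z/p\Z)^{|E^+|}$; in particular $N$ is abelian of exponent $p$.

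Second, I would choose the prime. Assuming $\mathbf{V}$ contains a non-trivial group (the trivial-$\mathbf{V}$ case being degenerate in the framework of the paper), fix any non-trivial $K\in\mathbf{V}$ and let $p$ be a prime dividing $|K|$. By Cauchy's theorem $\Z/p\Z$ embeds into $K$, so closure of $\mathbf{V}$ under subgroups gives $\Z/p\Z\in\mathbf{V}$. Closure under finite direct products then yields $(\Z/p\Z)^k\in\mathbf{V}$ for every $k\geq 0$, and one further application of closure under subgroups gives $N\in\mathbf{V}$.

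Finally, I would invoke the extension hypothesis. The short exact sequence $1\to N\to G^{\mathbf{Ab}(p)}\to G\to 1$ has abelian kernel $N\in\mathbf{V}$ and quotient $G\in\mathbf{V}$, so by the assumption that $\mathbf{V}$ is closed under extensions with abelian kernel, $G^{\mathbf{Ab}(p)}\in\mathbf{V}$. There is no real obstacle here: the proof is essentially a bookkeeping exercise matching each piece of the construction against a closure property of $\mathbf{V}$. The only mildly substantive step is the observation that non-triviality of $\mathbf{V}$ forces some $\Z/p\Z$ to lie in $\mathbf{V}$, which is precisely the mechanism that lets the extension hypothesis absorb the abelian kernel.
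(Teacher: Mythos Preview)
Your proposal is correct and follows essentially the same approach as the paper: choose a prime $p$ with $\Z/p\Z\in\mathbf{V}$ via Cauchy's theorem, observe that the relevant abelian $p$-group lies in $\mathbf{V}$ by closure under products and subgroups, and then apply the abelian-kernel extension hypothesis. The only cosmetic difference is that the paper applies the extension hypothesis to the full semidirect product $C_pE^+\semiacted G$ (with kernel $C_pE^+$) and then passes to the subgroup $G^{\mathbf{Ab}(p)}$, whereas you work directly with the short exact sequence $1\to N\to G^{\mathbf{Ab}(p)}\to G\to 1$; both are equally valid.
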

    \begin{proof}
        First note that every non-trivial group in $\mathbf{V}$ contains a subgroup $\Z/p\Z$ for any prime $p$ dividing its order, so $\Z/p\Z\in\mathbf{V}$ for some prime $p$. Thus, in the above notation, $C_pE^+$ is an abelian $p$-group in $\mathbf{V}$. Since $\mathbf{V}$ is closed under extensions with abelian kernel and $G\in\mathbf{V}$, its extension $G^{\mathbf{Ab}(p)}$ lies in $\mathbf{V}$ as well.
    \end{proof}

    \begin{lemma}\label{lem:deck}
        Let $H$ be a finitely generated subgroup of $F$. If $H$ is closed in the pro-$\mathbf{V}$ topology of $F$, then its Stallings graph $\mc{S}(H)$ extends to some cover $\mc{S}(H)^+$ whose group of deck transformations lie in $\mathbf{V}$.
    \end{lemma}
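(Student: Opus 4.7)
The plan is to leverage the pro-$\mathbf{V}$ closedness of $H$ to find a single pro-$\mathbf{V}$ open subgroup $K\supseteq H$ of $F$ that already separates the finitely many cosets of $H$ appearing as vertices of $\mc{S}(H)$; then $\mc{S}(H)^+$ can be taken to be the Schreier coset graph of $K$ in $F$, whose deck group will automatically lie in $\mathbf{V}$.

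First I would enumerate the vertices of $\mc{S}(H)$ as $v=v_1,\dots,v_n$ and pick, for each $i$, a path in $\mc{S}(H)$ from $v$ to $v_i$ with label $g_i\in F$. Because $\mc{S}(H)$ embeds into the (possibly infinite) Schreier graph of $H$ in $F$, the cosets $Hg_i$ are pairwise distinct, i.e.\ $g_ig_j^{-1}\notin H$ whenever $i\neq j$. To each such pair I would apply Lemma~\ref{lem:closed}(3), producing a surjection $\phi_{ij}:F\onto F/N_{ij}$ with $F/N_{ij}\in\mathbf{V}$ separating $g_ig_j^{-1}$ from $H$, and then set $N:=\bigcap_{i\neq j}N_{ij}$. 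The quotient $F/N$ embeds into the finite product $\prod_{i\neq j}F/N_{ij}$, so $F/N\in\mathbf{V}$ by closure of $\mathbf{V}$ under finite products and subgroups.

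Next, I would let $K:=HN$, an open subgroup of $F$ of finite index containing $H$, and check that the cosets $Kg_i$ are still pairwise distinct: $Kg_i=Kg_j$ would force $g_ig_j^{-1}\in HN\subseteq HN_{ij}$, hence $\phi_{ij}(g_ig_j^{-1})\in\phi_{ij}(H)$, contradicting the choice of $\phi_{ij}$. Taking $\mc{S}(H)^+$ to be the Schreier graph of $K$ in $F$ then yields a finite cover of the rose, and $v_i\mapsto Kg_i$ extends to a label-preserving graph embedding $\mc{S}(H)\hookrightarrow\mc{S}(H)^+$: vertex-injectivity was just verified, and edge-injectivity follows because $\mc{S}(H)$ is an immersion and each cover-edge is uniquely determined by its label and source.

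Finally, the deck transformation group of $\mc{S}(H)^+$ is $F/K_F$, where $K_F$ denotes the normal core of $K$ in $F$. Since the normal subgroup $N\subseteq K$ is already contained in $K_F$, the quotient $F/K_F$ is a quotient of $F/N\in\mathbf{V}$, and hence itself lies in $\mathbf{V}$. The main delicate point is ensuring that only \emph{finitely many} separation conditions -- one per pair $i\neq j$ -- need to hold simultaneously, and this is exactly what lets us aggregate them into a single pro-$\mathbf{V}$ open subgroup $K$ by invoking closure of $\mathbf{V}$ under finite products.
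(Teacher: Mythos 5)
Your proof is correct and takes essentially the same route as the paper: both use the finiteness of $\mc{S}(H)$ together with closedness of $H$ to find a single open subgroup $K\supeq H$ with $F/K_F\in\mathbf{V}$ that still separates the finitely many vertex-cosets of $\mc{S}(H)$, and then take $\mc{S}(H)^+$ to be the Schreier graph of $K$. The only difference is cosmetic -- you invoke Lemma \ref{lem:closed}(3) once per pair of vertices and intersect the resulting kernels, whereas the paper invokes Lemma \ref{lem:closed}(2) and extracts a finite subfamily of the open subgroups via a finiteness argument on equivalence relations; your version is, if anything, a more explicit implementation of the same idea.
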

    \begin{proof}
        Since $H$ is closed, we can write $H=\bigcap_iU_i$ as the intersection of all open subgroups $U_i\subseq_oF$ containing $H$. For each $i$, let $f_i:\mc{S}(H)\to\mc{S}(U_i)$ be the map of graphs obtained by extending the set of generators of $H$ to one for $U_i$, and constructing $\mc{S}(H)$ inside $\mc{S}(U_i)$ accordingly. Note that these maps \textit{need not} be injective, as the construction of $\mc{S}(U_i)$ might fold more admissible edges than $\mc{S}(H)$ does.

        As $\mc{S}(H)$ is finite, there are finitely many equivalence relations on $\mc{S}(H)$, so, after reordering, there exists $m\in\N$ such that for any $h,h'\in H$, we have $h=h'$ iff $f_i(h)=f_i(h')$ for all $1\leq i\leq m$. Let $U\coloneqq\bigcap_{i\leq m}U_i$, which is open, and let $f:\mc{S}(H)\to\mc{S}(U)$ be as above. Then $f$ is injective, so $\mc{S}(H)$ expands a covering $\mc{S}(U)$ whose group of deck transformations lies in $\mathbf{V}$ since $\Aut(\mc{S}(U))\iso F/\pi_1(\mc{S}(U))_F=F/U_F\in\mathbf{V}$.
    \end{proof}

    \section{Proof of main theorem}

    \subsection{Setup}

    Let us attempt to implement the strategy outlined in Remark \ref{rem:strategy} to prove Theorem \ref{mthm}. We will see how these $p$-elementary extensions, and in particular (\ref{eqn:magic}), naturally arise in the proof.

    Let $\mathbf{V}$ be a pseudovariety of groups closed under extensions with abelian kernel and let $H_1,\dots,H_n\subseq F$ be finitely generated subgroups which are closed in the pro-$\mathbf{V}$ topology of $F$. We seek, for each word $w\in F$, a group $K\in\mathbf{V}$ such that if $[w]_K\in[H_1\cdots H_n]_K$, then $w\in H_1\cdots H_n$; Lemma \ref{lem:closed} then finishes the proof.

    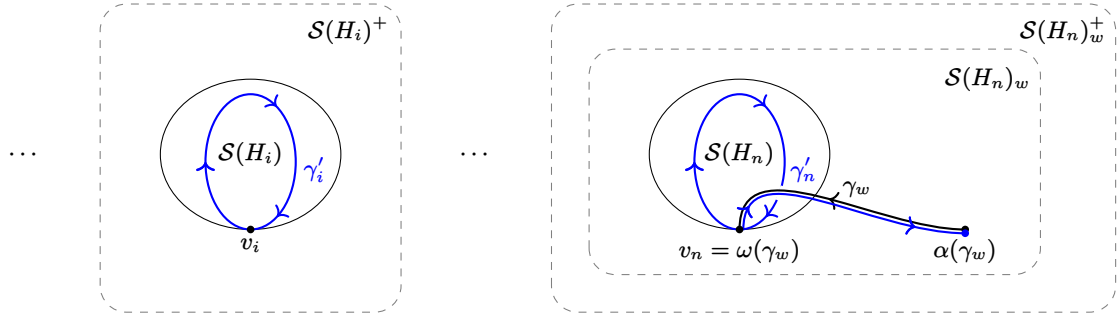
\begin{figure}
        \center
        \begin{center}
            \begin{tikzpicture}
                \draw[dashed, gray, rounded corners=10] (-2,-1.1) rectangle (2,3);
                \node at (1.3,2.65) {\small$\mc{S}(H_i)^+$};

                \node at (0,1) {\small$\mc{S}(H_i)$};
                \draw (0,1) ellipse (1.2cm and 1cm);

                \draw[thick, blue, decoration = {markings, mark=between positions 0 and 1 step 0.333 with \arrow{>}}, postaction = {decorate}]
                    (0,0.9) ellipse (-0.6cm and 0.9cm);
                \node[blue] at (0.85,0.8) {\small$\gamma_i'$};

                \fill (0,0) circle (0.02in) node[below] {\small$v_i$};

                \node at (-3,1) {\small$\cdots$};
                \node at (3,1) {\small$\cdots$};

                \begin{scope}[xshift=6.5cm]
                    \draw[dashed, gray, rounded corners=10] (-2.5,-1.1) rectangle (5,3);
                    \node at (4.3,2.65) {\small$\mc{S}(H_n)_w^+$};

                    \draw[dashed, gray, rounded corners=10] (-2,-0.6) rectangle (4,2.4);
                    \node at (3.3,2) {\small$\mc{S}(H_n)_w$};

                    \node at (0,1) {\small$\mc{S}(H_n)$};
                    \draw (0,1) ellipse (1.2cm and 1cm);

                    \draw[thick, blue, decoration = {markings, mark=between positions 0 and 1 step 0.333 with \arrow{>}}, postaction = {decorate}]
                        (0,0.9) ellipse (-0.6cm and 0.9cm);
                    \node[blue] at (0.85,0.8) {\small$\gamma_n'$};

                    \coordinate (a) at (0.478,0.4);
                    \fill[white, rotate=-20] (a) rectangle ($(a)+(0.05,0.2)$);

                    \draw[->-=0.55, thick] (3,0) to[out=180, in=90] node[xshift=0.5cm, yshift=0.1cm] {\small$\gamma_w$} (0,0);
                    \fill (3,0) circle (0.02in) node[below] {\small$\alpha(\gamma_w)$};

                    \draw[thick, blue, ->-=0.1, ->-=0.8] (0.05,-0.008) .. controls (0.05,1.1) and (1.75,-0.05) .. (3,-0.05);
                    \fill[blue] (3,-0.05) circle (0.02in);

                    \fill (0,0) circle (0.02in) node[below] {\small$v_n=\omega(\gamma_w)$};
                \end{scope}
            \end{tikzpicture}

            \hspace{0.05in}
            \begin{minipage}[t]{0.35\textwidth}
                \subcaption{The Stallings graph $\mc{S}(H_i)$ and its expansion $\mc{S}(H_i)^+$, together with a loop $\gamma_i'$ based at $v_i$ lying inside $\mc{S}(H_i)$.}
            \end{minipage}
            \hspace{0.45in}
            \begin{minipage}[t]{0.45\textwidth}
                \subcaption{The Stallings graph $\mc{S}(H_n)$, expanded to a graph $\mc{S}(H_n)_w$ by attaching a path $\gamma_w$. The path $\gamma_n'$ traces a loop at $v_n=\omega(\gamma_w)$, and then follows $\bar{\gamma_w}$ to $\alpha(\gamma_w)$.}
            \end{minipage}
        \end{center}
        \begin{minipage}{0.95\textwidth}
            \caption{The Stallings graphs associated to $H_1,\dots,H_n$ and $w$ and the loops $\gamma_1',\dots,\gamma_n'$ representing the identity in $G$, which we need to modify so that they represent the identity in $F$.}
            \label{fig:proof_balloons}
            \vspace{-0.1in}
        \end{minipage}
    \end{figure}

    For each $1\leq i<n$, let $\Gamma_i\coloneqq(\mc{S}(H_i),v_i)$ be the Stallings graph of $H_i$, and let $\Gamma_n\coloneqq\mc{S}(H_n)_w$ be the graph obtained by attaching to $(\mc{S}(H_n),v_n)$ a path $\gamma_w$ labelled $w$, where $\omega(\gamma_w)=v_n$ and $\alpha(\gamma_w)\not\in\mc{S}(H_n)$, which we then fold as in the proof of Hall's Theorem; see Figure \ref{fig:proof_balloons}. By Lemma \ref{lem:deck}, these graphs extend to covers whose groups of deck transformations $G_1,\dots,G_n$ lie in $\mathbf{V}$. Let $\Gamma\coloneqq\bigsqcup_{i=1}^n\Gamma_i$ and note that the group of deck transformations of $\Gamma^+=\bigsqcup_{i=1}^n\Gamma_i^+$ can be realized as the group
    \vspace{-0.05in}
    \begin{equation*}
        G\coloneqq\l\langle(f^1_x,\dots,f^n_x)\st x\in X\r\rangle\subseq G_1\times\cdots\times G_n,
        \vspace{-0.05in}
    \end{equation*}
    where $f^i_x\in G_i$ is the deck transformation induced by the action of $x$. Since $\mathbf{V}$ is closed under subgroups and finite direct products, we see that $G\in\mathbf{V}$.
    
    This is where we diverge from Stallings' proof of Hall's Theorem above. To see why, let us first attempt to show that our candidate group $G$ works, so assume that $[w]_G\in[H_1\cdots H_n]_G$. To show that $w\in H_1\cdots H_n$, which is equivalent to $1\in H_1\cdots H_nw^{-1}$, it suffices to construct paths $\gamma_i$ in $\Gamma_i$ for each $1\leq i\leq n$, such that the following two properties hold.
    \begin{enumerate}
        \item $\gamma_i$ is a loop based at $v_i$ for each $1\leq i<n$, and $\gamma_n$ is a path from $\omega(\gamma_w)=v$ to $\alpha(\gamma_w)$.
            \vspace{-0.05in}
        \item $[\gamma_1\cdots\gamma_n]_F=1$.
    \end{enumerate}

    Paths satisfying (1) are easy to come by: since $[w]_G\in[H_1\cdots H_n]_G$, we can write $[w]_G=[h_1\cdots h_n]_G$ for some $h_i\in H_i$, so $[h_1\cdots h_nw^{-1}]_G=1$, and hence the reduced paths $\gamma_i'$ in $\Gamma_i$ starting from $v_i$ with $\ell(\gamma_i')=h_i$ for $1\leq i<n$ and $\ell(\gamma_n')=h_nw^{-1}$ satisfy (1); see Figure \ref{fig:proof_balloons}. Note that $[\gamma_1'\cdots\gamma_n']_G=1$ by construction, but there is no reason for the labels of $\gamma_1',\dots,\gamma_n'$ to cancel out in $F$, as $G$ is not necessarily free.

    The issue is that the words $w_i\coloneqq\ell(\gamma_i')$ might not have sufficient overlap with its adjacent words to cancel in $F$. In other words, the paths $\eta_i'$ in the Cayley graph $\mc{C}(G)$, labelled by $w_i$ and placed tip-to-tail starting from $1$, forms a homotopically non-trivial loop; our task is to pinch this loop down (see Figure \ref{fig:proof_to_pinch}).

    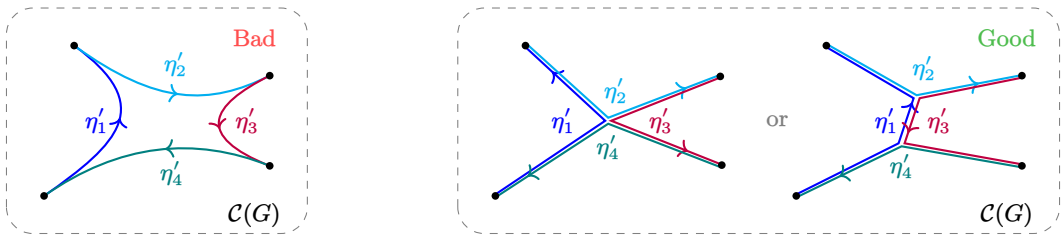
\begin{figure}
        \center
        \begin{center}
            \begin{tikzpicture}
                \coordinate (a) at (0,0);
                \coordinate (b) at (0.4,2);
                \coordinate (c) at (3,1.6);
                \coordinate (d) at (3,0.4);

                \coordinate (ab) at (1,0.7);   
                \coordinate (bc) at (1.5,1.2); 
                \coordinate (cd) at (2.2,1.2); 
                \coordinate (da) at (2,0.8);   

                \draw[->-=0.55, thick, blue]   (a) .. controls (ab) and (bc) .. (b);
                \draw[->-=0.55, thick, cyan]   (b) .. controls (bc) and (cd) .. (c);
                \draw[->-=0.55, thick, purple] (c) .. controls (cd) and (da) .. (d);
                \draw[->-=0.45, thick, teal]   (d) .. controls (da) and (ab) .. (a);

                \fill (a) circle (0.02in);
                \fill (b) circle (0.02in);
                \fill (c) circle (0.02in);
                \fill (d) circle (0.02in);

                \node[blue]   at (0.7,1)     {\small$\eta_1'$};
                \node[cyan]   at (1.75,1.75) {\small$\eta_2'$};
                \node[purple] at (2.7,1)     {\small$\eta_3'$};
                \node[teal]   at (1.7,0.3)   {\small$\eta_4'$};

                \node[red, opacity=0.7] at (2.8,2.1) {\small Bad};
                \node at (2.8,-0.25) {\small$\mc{C}(G)$};

                \def\s{6}
                \def\t{10}
                \def\p{0.04}

                \draw[rounded corners=10, dashed, gray] (-0.5,-0.5) rectangle (3.5,2.5);
                \draw[rounded corners=10, dashed, gray] (\s-0.5,-0.5) rectangle (\t+3.5,2.5);

                \begin{scope}[xshift=\s cm]
                    \coordinate (center) at (1.5,1); 

                    \coordinate (newA) at ($(a)+(\s,0)$);
                    \coordinate (newB) at ($(b)+(\s,0)$);
                    \coordinate (newC) at ($(c)+(\s,0)$);
                    \coordinate (newD) at ($(d)+(\s,0)$);

                    \draw[thick, blue, ->-=0.85]   ($(newA)+(0,\p/2)$)  -- ($(center)+(-\p,0)$) -- ($(newB)+(0,-\p/2)$);
                    \draw[thick, cyan, ->-=0.85]   ($(newB)+(\p,0)$)    -- ($(center)+(0,\p)$)  -- ($(newC)+(-\p,0)$);
                    \draw[thick, purple, ->-=0.85] ($(newC)+(0,-\p/2)$) -- ($(center)+(\p,0)$)  -- ($(newD)+(0,\p/2)$);
                    \draw[thick, teal, ->-=0.85]   ($(newD)+(0,-\p/2)$) -- ($(center)+(0,-\p)$) -- ($(newA)+(0,-\p/2)$);

                    \fill ($(newA)$)               circle (0.02in);
                    \fill ($(newB)$)               circle (0.02in);
                    \fill ($(newC)+(-0.01,-0.01)$) circle (0.02in);
                    \fill ($(newD)+(0.01,0.01)$)   circle (0.02in);

                    \node[blue]   at (0.9,1)    {\small$\eta_1'$};
                    \node[cyan]   at (1.6,1.4)  {\small$\eta_2'$};
                    \node[purple] at (2.2,1)    {\small$\eta_3'$};
                    \node[teal]   at (1.5,0.65) {\small$\eta_4'$};

                    \node[darkGreen, opacity=0.7] at (6.8,2.1) {\small Good};
                    \node at (6.8,-0.25) {\small$\mc{C}(G)$};
                \end{scope}

                \begin{scope}[xshift=\t cm]
                    \coordinate (centerBot) at (1.4,0.7); 
                    \coordinate (centerTop) at (1.6,1.3); 

                    \coordinate (newA) at ($(a)+(\t,0)$);
                    \coordinate (newB) at ($(b)+(\t,0)$);
                    \coordinate (newC) at ($(c)+(\t,0)$);
                    \coordinate (newD) at ($(d)+(\t,0)$);

                    \draw[thick, blue, ->-=0.6] ($(newA)+(0,\p/2)$)       -- ($(centerBot)+(-\p,0)$) -- ($(centerTop)+(-\p,0)$) -- ($(newB)+(0,-\p/2)$);
                    \draw[thick, cyan, ->-=0.8] ($(newB)+(\p,0)$)         -- ($(centerTop)+(0,\p)$)                             -- ($(newC)+(-\p,0)$);
                    \draw[thick, purple, ->-=0.52] ($(newC)+(0,-2*\p/3)$) -- ($(centerTop)+(\p,0)$)  -- ($(centerBot)+(\p,0)$)  -- ($(newD)+(0,\p/2)$);
                    \draw[thick, teal, ->-=0.8] ($(newD)+(0,-\p/2)$)      -- ($(centerBot)+(0,-\p)$)                            -- ($(newA)+(0,-\p/2)$);

                    \fill ($(newA)$) circle (0.02in);
                    \fill ($(newB)$) circle (0.02in);
                    \fill ($(newC)$) circle (0.02in);
                    \fill ($(newD)$) circle (0.02in);

                    \node[blue]   at (1.2,1)   {\small$\eta_1'$};
                    \node[cyan]   at (1.7,1.7) {\small$\eta_2'$};
                    \node[purple] at (1.9,1)   {\small$\eta_3'$};
                    \node[teal]   at (1.4,0.4) {\small$\eta_4'$};

                    \node[gray] at (-0.25,1) {\small or};
                \end{scope}
            \end{tikzpicture}
        \end{center}
        \begin{minipage}{0.95\textwidth}
            \vspace{0.1in}
            \caption{In $\mc{C}(G)$, the words $w_1,\dots,w_n$ trace out paths $\eta_i'$ starting from $1$ that together closes a loop; we need to ensure that this loop is homotopically trivial, so that $[w_1\cdots w_n]_F=1$.}
            \label{fig:proof_to_pinch}
        \end{minipage}
    \end{figure}

    First, however, we need to clarify how paths in $\mc{C}(G)$ can be used to construct paths in $\Gamma$. We will state a lemma to this effect in a slightly more general form, which will be needed in the sequel.

    \begin{lemma}\label{lem:create_paths}
        Let $\Gamma$ be an immersion and let $G$ be the group of deck transformations of any covering expansion $\Gamma^+$ of $\Gamma$. Let $H$ be a group and fix a morphism $\phi:H\to G$ such that $[w]_G=\phi([w]_H)$ for each word $w\in F$.

        Let $\eta$ and $\eta'$ be reduced paths in $\mc{C}(H)$ such that $\alpha(\eta)=\alpha(\eta')$, and suppose that each geometric edge of $\eta$ is also traversed by $\eta'$. For any reduced path $\gamma'$ in $\Gamma$ such that $\ell(\gamma')=\ell(\eta')$, there is a path $\gamma$ in $\Gamma$ such that $\ell(\gamma)=\ell(\eta)$ and $\alpha(\gamma)=\alpha(\gamma')$. If in addition $\omega(\eta)=\omega(\eta')$, then $\omega(\gamma)=\omega(\gamma')$ too.
    \end{lemma}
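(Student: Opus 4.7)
The plan is to build a label-preserving graph morphism $\psi\colon\mc{C}(H)\to\Gamma^+$ sending $\alpha(\eta)$ to $\alpha(\gamma')$, then observe by uniqueness of path-lifts in the cover $\Gamma^+$ that $\psi(\eta')=\gamma'$, and finally take $\gamma\coloneqq\psi(\eta)$. The hypothesis that every geometric edge of $\eta$ is traversed by $\eta'$ will then force $\gamma$ to stay inside $\Gamma\subeq\Gamma^+$, as required.

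To construct $\psi$, recall that $G$ acts on $V(\Gamma^+)$ by sending $v\cdot[x]_G$ to the terminal vertex of the unique edge at $v$ labeled $x$, for each $x\in X\sqcup X^{-1}$. Setting $h_0\coloneqq\alpha(\eta)=\alpha(\eta')$, I define $\psi$ on vertices by
\[
    \psi(h)\coloneqq\alpha(\gamma')\cdot\phi(h_0^{-1}h).
\]
This is tautologically well-defined, and the identity $\psi(hx)=\psi(h)\cdot\phi([x]_H)=\psi(h)\cdot[x]_G$ (using the compatibility $\phi([x]_H)=[x]_G$ from the hypothesis) allows $\psi$ to extend uniquely to a label-preserving graph morphism sending the $x$-edge $h\to hx$ in $\mc{C}(H)$ to the $x$-edge $\psi(h)\to\psi(hx)$ in $\Gamma^+$.

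Now $\psi(\eta')$ is a path in $\Gamma^+$ starting at $\alpha(\gamma')$ and carrying label $\ell(\eta')=\ell(\gamma')$; since $\Gamma^+$ is a cover of the rose, such a path is unique, and hence $\psi(\eta')=\gamma'$. In particular, every edge of $\psi(\eta')$ lies in $\Gamma$. By the hypothesis that each geometric edge of $\eta$ is traversed by $\eta'$, every edge of $\psi(\eta)$ is, up to orientation, an edge of $\psi(\eta')=\gamma'$, and so lies in $\Gamma$. Setting $\gamma\coloneqq\psi(\eta)$ thus yields a path in $\Gamma$ with $\ell(\gamma)=\ell(\eta)$ and $\alpha(\gamma)=\psi(h_0)=\alpha(\gamma')$. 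If moreover $\omega(\eta)=\omega(\eta')$, then $\omega(\gamma)=\psi(\omega(\eta))=\psi(\omega(\eta'))=\omega(\gamma')$, completing the proof.

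The only substantive step is the construction of $\psi$; its well-definedness as a graph morphism rests entirely on the compatibility $\phi([w]_H)=[w]_G$ baked into the hypothesis. Once that is in place, the identification $\psi(\eta')=\gamma'$ is forced by the uniqueness of path-lifts in $\Gamma^+$, and the remaining claims reduce to the observation that $\psi(\eta)$ only traverses edges already traversed by $\gamma'$.
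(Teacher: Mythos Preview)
Your proof is correct and follows essentially the same approach as the paper: both construct a label-preserving map $\mc{C}(H)\to\Gamma^+$ sending $\alpha(\eta')$ to $\alpha(\gamma')$, identify its image of $\eta'$ with $\gamma'$ by uniqueness of lifts, and conclude that the image of $\eta$ stays in $\Gamma$. The only cosmetic difference is that the paper obtains this map by invoking the lifting criterion for covering spaces (via the inclusion $\pi_1(\mc{C}(H))\subseteq\pi_1(\Gamma^+)$), whereas you build it explicitly from the monodromy action of $G$ on $V(\Gamma^+)$; these produce the same map.
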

    \begin{proof}
        Consider the covering map $f:(\mc{C}(H),\alpha(\eta'))\onto(\Gamma^+,\alpha(\gamma'))$, which exists since $\pi_1(\mc{C}(H),\alpha(\eta'))$ is a subgroup of $\pi_1(\Gamma^+,\alpha(\gamma'))$. Since $\eta'$ and $\gamma'$ are reduced paths with the same label and $\alpha(f(\eta'))=f(\alpha(\eta'))=\alpha(\gamma')$, we see that $\gamma'=f(\eta')$. Set $\gamma\coloneqq f(\eta)$, which lies in $\Gamma$ since $\gamma'=f(\eta')$ lies in $\Gamma$ and every geometric edge of $\eta$ is also traversed by $\eta'$. The rest of the claims are easily checked.
    \end{proof}

    \begin{figure}
        \center
        \begin{center}
            \begin{tikzpicture}
                \def\s{6}
                \def\p{0.04}
                \draw[rounded corners=10, dashed, gray] (-0.5,-0.5) rectangle (4,2.5);
                \draw[rounded corners=10, dashed, gray] (\s-0.5,-0.5) rectangle (\s+4,2.5);

                \coordinate (a) at (0,0);
                \coordinate (b) at (1,0.5);
                \coordinate (c) at (2,1);
                \coordinate (d) at (2.5,1.5);
                \coordinate (e) at (3,2);

                \coordinate (b1) at (1.2,2); 
                \coordinate (b2) at (0,0.1); 

                \draw[line width=8pt, line cap=round, rounded corners=1, pink, opacity=0.6] (a) to[out=10, in=260] (b);

                \draw[thick, blue, ->-=0.5] (a) to[out=10, in=260] (b);
                \draw[thick, blue, ->-=0.6] (b) .. controls (b1) and (b2) .. (b);
                \draw[thick, blue, ->-=0.35, ->-=0.7] (b) -- (c) -- (e);

                \draw[thick, cyan, ->-=0.35, ->-=0.6] ($(e)+(-\p/2,\p)$)
                    -- ($(d)+(-\p/2,\p)$)
                    to[out=170, in=90] ($(c)+(0,\p)$)
                    to[out=270, in=-30] ($(b)+(\p,0)$)
                    to[out=265, in=10] ($(a)+(0,-\p)$);

                \fill ($(a)+(0,-\p/2)$)    circle (0.02in) node[below] {\small$1$} node[yshift=0.4cm, pink] {\small$\Omega$};
                \fill ($(e)+(-\p/2,\p/2)$) circle (0.02in) node[right] {\small$[w_1]_G$};

                \node[blue] at (0.5,1.25) {\small$\eta_1'$};
                \node[cyan] at (2,0.5) {\small$\eta_2'$};

                \draw[thick, red, ->-=0.65] (b) -- node[above] {\small$e$} ($(b)+(0.4,0.2)$);
                \fill[red] (b) circle (0.02in);
                \fill[red] ($(b)+(0.4,0.2)$) circle (0.02in);

                \node[red, opacity=0.7] at (0,2.1) {\small Bad};
                \node at (3.3,-0.25) {\small$\mc{C}(G)$};

                \begin{scope}[xshift=\s cm]
                    \coordinate (newA) at ($(a)+(\s,0)$);
                    \coordinate (newB) at ($(b)+(\s,0)$);
                    \coordinate (newC) at ($(c)+(\s,0)$);
                    \coordinate (newD) at ($(d)+(\s,0)$);
                    \coordinate (newE) at ($(e)+(\s,0)$);

                    \coordinate (newB1) at ($(b1)+(\s,0)+(0,-\p)$);
                    \coordinate (newB2) at ($(b2)+(\s,0)+(-\p,-\p)$);

                    \draw[line width=8pt, line cap=round, rounded corners=1, pink, opacity=0.6] (newA) to[out=10, in=260] (newB) to[out=-30, in=270] ($(newC)+(\p,\p)$) to[out=90, in=170] (newD) -- (newE);

                    \draw[thick, blue, ->-=0.5] (newA) to[out=10, in=260] (newB);
                    \draw[thick, blue, ->-=0.6] (newB) .. controls (newB1) and (newB2) .. ($(newB)+(\p,-\p)$);
                    \draw[thick, blue, ->-=0.35, ->-=0.7] ($(newB)+(\p,-\p)$) to[out=-30, in=270] ($(newC)+(\p,\p)$) to[out=90, in=170] (newD) -- (newE);
                    \draw[thick, blue, opacity=0.2] ($(newB)+(\p,-\p)$) -- (newC) -- (newD);

                    \draw[thick, cyan, ->-=0.35, ->-=0.6] ($(newE)+(-\p/2,\p)$)
                        -- ($(newD)+(-\p/2,\p)$)
                        to[out=170, in=90] ($(newC)+(0,\p)$)
                        to[out=270, in=-30] ($(newB)+(\p,0)$)
                        to[out=265, in=10] ($(newA)+(0,-\p)$);

                    \fill ($(newA)+(0,-\p/2)$)    circle (0.02in) node[below] {\small$1$};
                    \fill ($(newE)+(-\p/2,\p/2)$) circle (0.02in) node[right] {\small$[w_1]_G$};

                    \node[blue] at (0.6,1.2) {\small$\eta_1'$};
                    \node[cyan] at (1.8,0.3) {\small$\eta_2'$};
                    \node[pink, yshift=0.35cm] at (newA) {\small$\eta$};
                    \node[pink, xshift=0.3cm, yshift=0.35cm, rotate=10] at (newA) {\small$\rightsquigarrow$};

                    \node[darkGreen, opacity=0.7] at (0.1,2.1) {\small Good};
                    \node at (3.3,-0.25) {\small$\mc{C}(G)$};
                \end{scope}
            \end{tikzpicture}
        \end{center}
        \begin{minipage}{0.95\textwidth}
            \vspace{0.1in}
            \caption{For the case $n=2$, the paths $\eta_1':1\to[w_1]_G$ and $\eta_2':[w_1]_G\to1$ cannot enclose bigons if we take $K\coloneqq G^{\mathbf{Ab}(p)}$; since $[w_1w_2]_K=1$, every edge in $\Delta_1\cup\Delta_2$ must be traversed $0$ times mod $p$. In the figure, this is the case if the self-loop that $\eta_1'$ makes is traversed a multiple of $p$ times.}
            \label{fig:proof_2}
        \end{minipage}
    \end{figure}
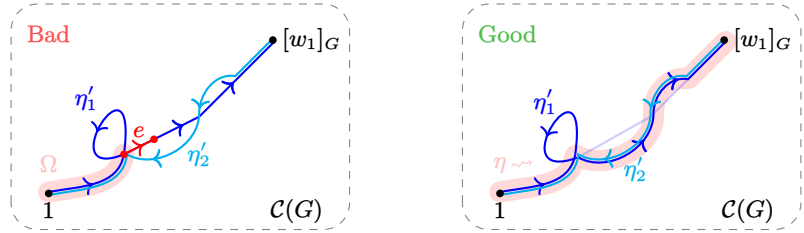

    \subsection{The proof in the case $n=2$}

    Let us first see Lemma \ref{lem:create_paths} in action for the case $n=2$. Here, the words $w_1$ and $w_2$ trace out paths $\eta_1':1\to[w_1]_G$ and $\eta_2':[w_1]_G\to1$ in $\mc{C}(G)$, which enclose zero or more bigons; see Figure \ref{fig:proof_2}. To ensure that no bigons are enclosed, and so that they admit a common `spine' $\eta:1\to[w_1]_G$, it suffices to `force' certain edges in $\mc{C}(G)$ to be traversed by both $\eta_1'$ and $\eta_2'$.

    By virtue of (\ref{eqn:magic}), this is where extensions of the form $K\coloneqq G^{\mathbf{Ab}(p)}$ come in; recall that Lemma \ref{lem:extension} ensures that is a prime $p$ such that $K\in\mathbf{V}$. Under the assumption that $[w]_K\in[H_1H_2]_K$, we can construct paths $\gamma_i'$ in $\Gamma_i$ as above, but this time we have $[\gamma_1'\gamma_2']_K=1$, and hence $[w_1w_2]_K=1$; as $K\onto G$, clearly $[w_1w_2]_G=1$ too, and so there are paths $\eta_1':1\to[w_1]_G$ and $\eta_2':[w_1]_G\to1$ in $\mc{C}(G)$ such that $\ell(\eta_i')=w_i$. Now, we can show that they enclose no bigons. To this end, let $\Delta_i$ be the subgraphs of $\mc{C}(G)$ spanned by $\eta_i'$ for $i=1,2$.

    \begin{center}
        \begin{minipage}{0.95\textwidth}
            \begin{claim*}
                The intersection $\Delta_1\cap\Delta_2$ contains a reduced path $\eta:1\to[w_1]_G$.
            \end{claim*}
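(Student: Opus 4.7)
My plan is to exploit equation~(\ref{eqn:magic}) for the concatenated word $w_1w_2$. By construction of $\gamma_1'$ and $\gamma_2'$, we have $[\gamma_1'\gamma_2']_K = 1$, i.e., $[w_1w_2]_K = 1$, where $K = G^{\mathbf{Ab}(p)}$. Applying (\ref{eqn:magic}) to $w_1w_2$ then yields two pieces of information: first, $[w_1w_2]_G = 1$, which is what lets $\eta_1'$ and $\eta_2'$ concatenate to a closed loop in $\mc{C}(G)$; second, the stronger congruence $(w_1w_2)(e) \equiv 0 \pmod{p}$ for every $e \in E^+$. Writing $n_i(e)$ for the signed traversal count of $e$ by $\eta_i'$ and noting that $\eta_1'\cdot\eta_2'$ is the path traced by $w_1w_2$ starting from $1$, this splits as $n_1(e) + n_2(e) \equiv 0 \pmod{p}$.

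I will then let $Y \subseteq \mc{C}(G)$ be the subgraph spanned by those $e \in E^+$ with $n_1(e) \not\equiv 0 \pmod{p}$, and argue that (i) $Y \subseteq \Delta_1 \cap \Delta_2$, and (ii) the vertices $1$ and $[w_1]_G$ lie in the same connected component of $Y$. Any walk in $Y$ between them then reduces to a reduced path $\eta$, which is the desired object. Part (i) is immediate: for $e \in Y$, also $n_2(e) \equiv -n_1(e) \not\equiv 0 \pmod{p}$, so both $n_1(e)$ and $n_2(e)$ are nonzero integers, meaning the geometric edge underlying $e$ is used by both $\eta_1'$ and $\eta_2'$.

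For part (ii), I will view $c \coloneqq \sum_{e \in E^+} n_1(e)\, e$ as a $1$-chain in $C_1(\mc{C}(G); \Z/p\Z)$. By construction $c$ is supported on $Y$, and its boundary is $\partial c = [w_1]_G - 1$. Assuming $[w_1]_G \neq 1$ (the case $[w_1]_G = 1$ being handled by the empty path), this $0$-chain is nonzero; but the boundary of any $1$-chain on a graph has coefficients summing to zero on each connected component, which forces the two vertices $1$ and $[w_1]_G$ to lie in the same component of $Y$.

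The main delicate point I anticipate is the signed-count bookkeeping for $\eta_2'$, since it begins at $[w_1]_G$ rather than $1$, whereas (\ref{eqn:magic}) is stated for the path from $1$ traced by a given word. This is handled by applying (\ref{eqn:magic}) to the full concatenation $w_1w_2$, whose traced path does start at $1$, and observing that the total signed count of each $e \in E^+$ splits additively as $n_1(e) + n_2(e)$. Once this combinatorial identity is set up correctly, the remainder of the argument is an elementary observation on $1$-chains in $\mc{C}(G)$ with $\Z/p\Z$-coefficients.
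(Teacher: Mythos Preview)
Your proof is correct and takes a genuinely different route from the paper's. The paper argues by contradiction: it lets $\Omega$ be the connected component of $1$ in $\Delta_1\cap\Delta_2$, assumes $[w_1]_G\notin\Omega$, and counts that $\eta_1'$ exits $\Omega$ exactly once more than it enters; reducing this count modulo $p$ produces a boundary edge $e\in\Delta_1\setminus\Delta_2$ with $[w_1(e)]_p\neq0$, and since $e\notin\Delta_2$ forces $[w_2(e)]_p=0$, the summand at $e$ in (\ref{eqn:magic}) contradicts $[w_1w_2]_K=1$. Your argument is instead direct and homological: you pass from $[w_1w_2]_K=1$ straight to the global congruence $n_1(e)+n_2(e)\equiv0\pmod p$, take $Y$ to be the mod-$p$ support of the $1$-chain $c=\sum_e n_1(e)\,e$ of $\eta_1'$, and use $\partial c=[w_1]_G-1$ together with the augmentation-kernel property of boundaries to force both endpoints into a single component of $Y\subseteq\Delta_1\cap\Delta_2$. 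Your approach is cleaner and avoids the in/out crossing bookkeeping; on the other hand, the paper's formulation in terms of the component $\Omega\subseteq\Delta_1\cap\Delta_2$ is not incidental, since precisely this object (and the same crossing-count trick) is reused in the general-$n$ claim to locate some intermediate $\Delta_j$ meeting $\Omega$, so their version plugs directly into the induction that follows.
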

            \begin{proof}
                Suppose otherwise and let $\Omega\sub\Delta_1\cap\Delta_2$ be the connected component containing $1$, so $\eta_1'$ leaves $\Omega$ exactly once more than it enters; see Figure \ref{fig:proof_2}. Formally, let $(e_n\st n\leq|\eta_1'|)$ list the edges of $\eta_1'$ in order, possibly with repetition, and let
                \vspace{-0.05in}
                \begin{equation*}
                    \begin{aligned}
                        \Omega_\mathrm{in}&\coloneqq\{n\st e_n\in\Delta_1,\alpha(e_n)\in\Delta_1\comp\Delta_2,\textrm{ and }\omega(e_n)\in\Omega\} \\
                        \Omega_\mathrm{out}&\coloneqq\{n\st e_n\in\Delta_1,\omega(e_n)\in\Delta_1\comp\Delta_2,\textrm{ and }\alpha(e_n)\in\Omega\},
                    \end{aligned}
                    \vspace{-0.05in}
                \end{equation*}
                from which it can be seen that $|\Omega_\mathrm{out}|-|\Omega_\mathrm{in}|=1$. Let $\Omega_\blob^+$ denote (the index set of) the set of positively-oriented edges in $\Omega_\blob$, where $\blob\in\l\{\mathrm{in},\mathrm{out}\r\}$, so $|\Omega_\blob|\equiv\sum\nolimits_{n\in\Omega_\blob^+}[w_1(e_n)]_p$ mod $p$. Thus we have
                \vspace{-0.1in}
                \begin{equation*}
                    1=|\Omega_\mathrm{out}|-|\Omega_\mathrm{in}|\equiv\l(\sum\nolimits_{n\in\Omega_\mathrm{out}^+}[w_1(e_n)]_p-\sum\nolimits_{n\in\Omega_\mathrm{in}^+}[w_1(e_n)]_p\r)\ \textrm{mod }p,
                    \vspace{-0.05in}
                \end{equation*}
                so there is an edge $e\coloneqq e_n$ with $n\in\Omega_\mathrm{out}^+\cup\Omega_\mathrm{in}^+$ such that $[w_1(e)]_p\neq0$. As $e\not\in\Delta_2$, we have $[w_2(e)]_p=0$, and so $[w_1w_2(e)]_p=[w_1(e)]_p\neq0$. This is a contradiction since on the one hand $[w_1w_2]_K$ is trivial, but according to (\ref{eqn:magic}), the element $[w_1w_2]_K$ has a non-trivial summand $([w_1w_2(e)]_p,1)$.
                \qedlem
            \end{proof}
        \end{minipage}
    \end{center}

    We now invoke Lemma \ref{lem:create_paths} to project $\eta$ and $\bar{\eta}$ down to paths $\gamma_1$ and $\gamma_2$ in $\Gamma$ in such a way that (1) remains valid, and now clearly $[\gamma_1\gamma_2]_F=[\eta]_F[\bar{\eta}]_F=1$. This finishes the proof of Theorem \ref{mthm} in the case $n=2$.

    \subsection{The general case}

    In the general case, we need to work a bit more, but the overall idea remains the same: the words $w_i$ for $1\leq i\leq n$ trace out paths $\eta_i'$ in $\mc{C}(G)$, and we need to pass to an extension to pinch down any $m$-gons, where $2\leq m\leq n$, that they enclose. However, one complication arises: the subgraphs $\Delta_1,\dots,\Delta_n$ of $\mc{C}(G)$ that the paths $\eta_1',\dots,\eta_n'$ span might not even have a common intersection; see Figure \ref{fig:proof_to_pinch}.

    This\vspace{-0.023in} issue can be (partially) resolved if we replace the paths $\eta_i'$ in $\mc{C}(G)$ by paths in the Cayley graph of an extension of $G$. Thus, inductively, we are led to considering an iterated extension $G_0\coloneqq G$ and $G_i\coloneqq G_{i-1}^{\mathbf{Ab}(p_i)}$ for $1\leq i<n$. We claim that taking $K\coloneqq G_{n-1}$ works. Recall that Lemma \ref{lem:extension} ensures that there is a tuple $(p_1,\dots,p_{n-1})$ of primes such that $K\in\mathbf{V}$, so we are still in good shape.

    Let $K\in\mathbf{V}$ be the iterated extension of $G$ as above and suppose that $[w]_K\in[H_1\cdots H_n]_K$, so as before we can construct paths $\gamma_i'$ in $\Gamma_i$ such that $[\gamma_1'\cdots\gamma_n']_K=1$. Let $w_i\coloneqq\ell(\gamma_i')$ for each $1\leq i\leq n$, so $[w_1\cdots w_n]_K=1$. Set $g_0\coloneqq1$ and $g_i\coloneqq[w_1\cdots w_i]_{G_{n-2}}$ for $1\leq i\leq n$, and let $\Delta_i$ be the subgraph of $\mc{C}(G_{n-2})$ spanned by the path $\eta_i':g_{i-1}\to g_i$ labelled $w_i$. Since $g_n=[w_1\cdots w_n]_K=1$, we see that $\alpha(\eta_1')=1=\omega(\eta_n')$. As in the case $n=2$, let $\Omega$ denote the connected component of the identity of $\Delta_1\cap\Delta_n$.

    \begin{center}
        \begin{minipage}{0.95\textwidth}
            \begin{claim*}
                There exists $1<j<n$ such that $\Delta_j\cap\Omega\neq\e$.
            \end{claim*}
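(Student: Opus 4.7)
The plan is a proof by contradiction that parallels the $n = 2$ case, with one new geometric observation to account for the middle subgraphs $\Delta_2, \ldots, \Delta_{n-1}$. Suppose that $\Delta_j \cap \Omega = \e$ for all $1 < j < n$. We first locate the vertices $g_0, \ldots, g_n$: trivially $g_0 = 1 \in \Omega$, and since the quotient $K \onto G_{n-2}$ carries $[w_1 \cdots w_n]_K = 1$ to $g_n$, also $g_n = 1 \in \Omega$. For each $1 \leq i \leq n-1$, the intermediate vertex $g_i$ is contained in $\Delta_i \cap \Delta_{i+1}$, and at least one of the indices $i, i+1$ falls in $\{2, \ldots, n-1\}$, so by assumption $g_i \notin \Omega$. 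In particular, the path $\eta_1'$ begins inside $\Omega$ and ends outside it.

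Redefine $\Omega_{\mathrm{in}}$ and $\Omega_{\mathrm{out}}$ exactly as in the $n = 2$ case, but with ``$\notin \Omega$'' replacing ``$\in \Delta_1 \comp \Delta_2$''. The same telescoping argument as before gives $|\Omega_{\mathrm{out}}| - |\Omega_{\mathrm{in}}| = 1$, since $\eta_1'$ exits $\Omega$ exactly one more time than it enters.

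The crux of the argument is the following geometric observation: every boundary edge $e$ of $\Omega$ appearing in $\eta_1'$ lies in no $\Delta_j$ with $j \neq 1$. For $1 < j < n$ this is immediate, since $e \in \Delta_j$ would force the endpoint of $e$ inside $\Omega$ to be a vertex of $\Delta_j$, contradicting $\Delta_j \cap \Omega = \e$. For $j = n$, if $e \in \Delta_n$ then $e \in \Delta_1 \cap \Delta_n$, so $e$ connects its two endpoints by an edge of the subgraph $\Delta_1 \cap \Delta_n$, placing them in the same connected component; as one endpoint lies in $\Omega$, the other must also, contradicting $e$'s boundary status. Hence $w_j(e) = 0$ for every $j \neq 1$, so $(w_1 \cdots w_n)(e) = w_1(e)$ on every boundary edge.

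To conclude, we apply the magic formula (\ref{eqn:magic}) to $K = G_{n-2}^{\mathbf{Ab}(p_{n-1})}$: the hypothesis $[w_1 \cdots w_n]_K = 1$ forces $[(w_1 \cdots w_n)(e)]_{p_{n-1}} = 0$ for every $e \in E^+$ of $\mc{C}(G_{n-2})$. Combined with the observation above, $[w_1(e)]_{p_{n-1}} = 0$ on every positive boundary edge, so the telescoping identity from the $n = 2$ case degenerates to $1 = |\Omega_{\mathrm{out}}| - |\Omega_{\mathrm{in}}| \equiv 0 \pmod{p_{n-1}}$, the desired contradiction. The main obstacle I anticipate is the $j = n$ part of the boundary observation; it critically uses that $\Omega$ is a full connected component of $\Delta_1 \cap \Delta_n$ rather than an arbitrary subset, which is why this specific definition of $\Omega$ is essential.
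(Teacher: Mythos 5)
Your proposal is correct and follows essentially the same route as the paper: a contradiction via the boundary-crossing count of $\eta_1'$ against $\Omega$, the observation that a boundary edge of $\Omega$ lying on $\eta_1'$ can belong to no $\Delta_j$ with $j\neq1$ (for $j=n$ precisely because $\Omega$ is a full connected component of $\Delta_1\cap\Delta_n$), and the magic formula (\ref{eqn:magic}) applied to $[w_1\cdots w_n]_K=1$. You merely phrase the endgame contrapositively (all boundary contributions vanish mod $p_{n-1}$, so $1\equiv 0$) where the paper extracts a single edge with $[w_1(e)]_p\neq 0$; the substance is identical, and your explicit justification of the $j=n$ case makes precise what the paper leaves to ``arguing as in the case $n=2$''.
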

            \begin{proof}
                Suppose otherwise that $\Delta_j\cap\Omega=\e$ for all $1<j<n$. Since $g_1\in\Delta_2$, we see that $g_1\in\Delta_1\comp\Omega$, and hence arguing as in the case $n=2$ furnishes a positively-oriented edge $e$ with one vertex in $\Omega$ such that $[w_1(e)]_p\neq0$ and $e\in\Delta_1\comp\Delta_n$, where $p\coloneqq p_{n-1}$; see Figure \ref{fig:proof_n}.

                \hspace{0.2in}By hypothesis, we have $e\not\in\Delta_j$ for every $1<j<n$, so $[w_j(e)]_p=0$ for every $1<j<n$, and hence $[w_1\cdots w_n(e)]_p=[w_1(e)]_p\neq0$. But we chose $K$ to be a $p$-elementary extension of $G_{n-2}$, from which it follows by virtue of (\ref{eqn:magic}) that $[w_1\cdots w_n]_K$ is non-trivial, a contradiction.
                \qedlem
            \end{proof}
        \end{minipage}
    \end{center}

    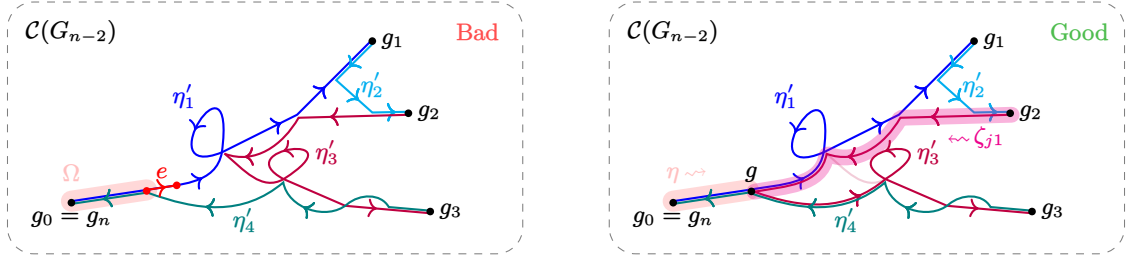
\begin{figure}
        \center
        \begin{center}
            \begin{tikzpicture}
                \def\s{8}
                \def\p{0.04}
                \draw[rounded corners=10, dashed, gray] (-1.85,-0.85) rectangle (5,2.5);
                \draw[rounded corners=10, dashed, gray] (\s-1.85,-0.85) rectangle (\s+5,2.5);

                \coordinate (i) at (-1,-0.15);
                \coordinate (a) at (0,0);
                \coordinate (b) at (1,0.5);
                \coordinate (c) at (2,1);
                \coordinate (d) at (2.5,1.5);
                \coordinate (e) at (3,2);
                \coordinate (f) at (3,1);
                \coordinate (g) at (3.5,1);

                \coordinate (b1) at (1.2,2);
                \coordinate (b2) at (0,0.1);

                \draw[line width=8pt, line cap=round, rounded corners=1, pink, opacity=0.6] (i) -- (a);

                \draw[thick, blue, ->-=0.8] (i) -- (a) to[out=10, in=260] (b);
                \draw[thick, blue, ->-=0.6] (b) .. controls (b1) and (b2) .. (b);
                \draw[thick, blue, ->-=0.35, ->-=0.65] (b) -- (c) -- (e);

                \draw[thick, cyan, ->-=0.25, ->-=0.6, ->-=0.9] ($(e)+(\p/2,-\p)$) -- ($(d)+(\p/2,-\p)$) -- ($(f)+(0,0.75*\p)$) -- ($(g)+(0,\p)$);
                \draw[thick, purple, ->-=0.4, ->-=0.9] (g) -- ($(c)+(\p/2,-\p)$) to[out=240, in=-20] ($(b)+(\p,-\p/2)$);

                \begin{scope}[shift={(b)}, rotate=-50, scale=0.8]
                    \coordinate (rotA) at (0,0);
                    \coordinate (rotB) at (1,0.5);
                    \coordinate (rotC) at (2,1);
                    \coordinate (rotD) at (2.5,1.5);
                    \coordinate (rotE) at (3,2);

                    \coordinate (rotB1) at (1.2,2);
                    \coordinate (rotB2) at (0,0.1);

                    \draw[thick, purple] ($(rotA)+(\p,\p/2)$) to[out=10, in=260] (rotB);
                    \draw[thick, purple, ->-=0.6] (rotB) .. controls (rotB1) and (rotB2) .. (rotB);
                    \draw[thick, purple, ->-=0.65] (rotB) -- (rotC) -- (rotE);

                    \draw[thick, teal, ->-=0.35, ->-=0.65, ->-=0.92] ($(rotE)+(-\p/2,\p)$)
                        -- ($(rotD)+(-\p/2,\p)$)
                        to[out=170, in=90] ($(rotC)+(0,\p)$)
                        to[out=270, in=-30] ($(rotB)+(\p,-\p)$)
                        to[out=270, in=30] ($(a)+(\p,-\p/2)$)
                        -- ($(i)+(\p,-\p/2)$);

                    \fill ($(rotE)+(-\p/2,\p/2)$) circle (0.025in) node[right] {\small$g_3$};
                \end{scope}

                \fill ($(i)+(0,-\p/2)$)    circle (0.02in) node[below] {\small$g_0=g_n$} node[yshift=0.4cm, pink] {\small$\Omega$};
                \fill ($(e)+(0,-\p/2)$)    circle (0.02in) node[right] {\small$g_1$};
                \fill ($(g)+(-\p/2,\p/2)$) circle (0.02in) node[right] {\small$g_2$};

                \node[blue]   at (0.5,1.25) {\small$\eta_1'$};
                \node[cyan]   at (3,1.4)    {\small$\eta_2'$};
                \node[purple] at (2.4,0.5)  {\small$\eta_3'$};
                \node[teal]   at (1.3,-0.4) {\small$\eta_4'$};

                \draw[thick, red, ->-=0.65] (a) -- node[above] {\small$e$} ($(a)+(0.4,0.061)$);
                \fill[red] ($(a)+(0,-\p/3)$) circle (0.02in);
                \fill[red] ($(a)+(0.4,0.061)$) circle (0.02in);

                \node[red, opacity=0.7] at (4.4,2.1) {\small Bad};
                \node at (-1,2.1) {\small$\mc{C}(G_{n-2})$};

                \begin{scope}[xshift=\s cm]
                    \coordinate (newI) at ($(i)+(\s,0)$);
                    \coordinate (newA) at ($(a)+(\s,0)$);
                    \coordinate (newB) at ($(b)+(\s,0)$);
                    \coordinate (newC) at ($(c)+(\s,0)$);
                    \coordinate (newD) at ($(d)+(\s,0)$);
                    \coordinate (newE) at ($(e)+(\s,0)$);
                    \coordinate (newF) at ($(f)+(\s,0)$);
                    \coordinate (newG) at ($(g)+(\s,0)$);

                    \coordinate (newB1) at ($(b1)+(\s,0)+(0,-\p)$);
                    \coordinate (newB2) at ($(b2)+(\s,0)+(-\p,-\p)$);

                    \draw[line width=8pt, line cap=round, rounded corners=1, pink, opacity=0.6] (newI) -- (newA);

                    \draw[thick, blue, ->-=0.8] (newI) -- (newA) to[out=10, in=260] (newB);
                    \draw[thick, blue, ->-=0.6] (newB) .. controls (newB1) and (newB2) .. (newB);
                    \draw[thick, blue, ->-=0.35, ->-=0.65] (newB) -- (newC) -- (newE);

                    \draw[thick, cyan, ->-=0.25, ->-=0.6, ->-=0.9] ($(newE)+(\p/2,-\p)$) -- ($(newD)+(\p/2,-\p)$) -- ($(newF)+(0,0.75*\p)$) -- ($(newG)+(0,\p)$);
                    \draw[thick, purple, ->-=0.19, ->-=0.42, ->-=0.93] (newG) -- ($(newC)+(\p/2,-\p)$) to[out=240, in=-20] ($(newB)+(\p,-\p/2)$) to[out=260, in=10] ($(newA)+(2*\p,-0.6*\p)$) to[out=-20, in=220] (1.83,0.15);

                    \draw[line width=6pt, line cap=round, rounded corners=1, magenta, opacity=0.3]
                        (newG) -- ($(newC)+(\p/2,-\p)$) to[out=240, in=-20] ($(newB)+(\p,-\p/2)$) to[out=260, in=10] ($(newA)+(2*\p,-0.6*\p)$);

                    \begin{scope}[shift={(newB)}, rotate=-50, scale=0.8]
                        \coordinate (rotA) at (0,0);
                        \coordinate (rotB) at (1,0.5);
                        \coordinate (rotC) at (2,1);
                        \coordinate (rotD) at (2.5,1.5);
                        \coordinate (rotE) at (3,2);

                        \coordinate (rotB1) at (1.2,2);
                        \coordinate (rotB2) at (0,0.1);

                        \draw[thick, purple, opacity=0.2] ($(rotA)+(\p,\p/2)$) to[out=10, in=260] (rotB);
                        \draw[thick, purple, ->-=0.6] (rotB) .. controls (rotB1) and (rotB2) .. (rotB);
                        \draw[thick, purple, ->-=0.65] (rotB) -- (rotC) -- (rotE);

                        \draw[thick, teal, ->-=0.35, ->-=0.65, ->-=0.92] ($(rotE)+(-\p/2,\p)$)
                            -- ($(rotD)+(-\p/2,\p)$)
                            to[out=170, in=90] ($(rotC)+(0,\p)$)
                            to[out=270, in=-30] ($(rotB)+(\p,-\p)$)
                            to[out=270, in=30] ($(newA)+(\p,-\p/2)$)
                            -- ($(newI)+(\p,-\p/2)$);

                        \fill ($(rotE)+(-\p/2,\p/2)$) circle (0.025in) node[right]{\small$g_3$};
                    \end{scope}

                    \fill ($(newI)+(0,-\p/2)$)    circle (0.02in) node[below] {\small$g_0=g_n$};
                    \fill ($(newA)+(\p,-\p/2)$)   circle (0.02in) node[above] {\small$g$};
                    \fill ($(newE)+(0,-\p/2)$)    circle (0.02in) node[right] {\small$g_1$};
                    \fill ($(newG)+(-\p/2,\p/2)$) circle (0.02in) node[right] {\small$g_2$};

                    \node[blue]    at (0.5,1.25) {\small$\eta_1'$};
                    \node[cyan]    at (3,1.4)    {\small$\eta_2'$};
                    \node[purple]  at (2.4,0.5)  {\small$\eta_3'$};
                    \node[teal]    at (1.3,-0.4) {\small$\eta_4'$};

                    \node[magenta]             at (3.2,0.7){\small$\zeta_{j1}$};
                    \node[magenta, rotate=180] at (2.8,0.7) {\small$\rightsquigarrow$};

                    \node[pink, yshift=0.35cm]                          at (newI) {\small$\eta$};
                    \node[pink, xshift=0.3cm, yshift=0.35cm, rotate=10] at (newI) {\small$\rightsquigarrow$};

                    \node[darkGreen, opacity=0.7] at (4.4,2.1) {\small Good};
                    \node at (-1,2.1) {\small$\mc{C}(G_{n-2})$};
                \end{scope}
            \end{tikzpicture}
        \end{center}
        \begin{minipage}{0.95\textwidth}
            \vspace{0.1in}
            \caption{In the general case, an additional extension is needed to obtain a `central' vertex $g\in\Omega\cap\Delta_j$. This\vspace{-0.02in} defines paths $\eta:1\to g$ and $\zeta_{j1}:g_{j-1}\to g$, which we can use to cut up $\eta_1'\cdots\eta_n'$ into two `shorter' loops at $g$; in the figure, the new loops are $\bar{\eta}\eta_1'\eta_2'\zeta_{j1}$ and $\bar{\zeta_{j1}}\eta_3'\eta_4'\eta$, projected appropriately to $\Gamma$.}
            \label{fig:proof_n}
            \vspace{-0.1in}
        \end{minipage}
    \end{figure}

    Take $g\in\Delta_j\cap\Omega$ for some fixed $1<j<n$, which we will use as our now starting point, at which we apply induction on certain collections of paths of size $j$ and $n-j+1$; the base case $n=2$ was handled previously. To construct these new paths, let $\zeta_{j1}:g_{j-1}\to g$ be the reduced path in $\Delta_j$ such that $\ell(\zeta_{j1})$ is a prefix of $\ell(\eta_j')$, and let $\eta:1\to g$ be a reduced path in $\Omega$ such that $\ell(\eta)$ is a prefix of $\ell(\eta_1')$; see Figure \ref{fig:proof_n}.

    Since $\ell(\gamma_j')=\ell(\eta_j')$, we can use Lemma \ref{lem:create_paths} to project $\zeta_{j1}$ down to a path $\delta_{j1}'$ in $\Gamma$ such that $\ell(\delta_{j1}')=\ell(\zeta_{j1})$ and $\alpha(\delta_{j1}')=\alpha(\gamma_j')$. Similarly, applying Lemma \ref{lem:create_paths} twice, we can project $\eta$ down to paths $\beta_1$ and $\beta_n$ in $\Gamma$ such that $\ell(\beta_1)=\ell(\beta_n)=\ell(\eta)$, $\alpha(\beta_1)=\alpha(\gamma_1')$, and $\alpha(\beta_n)=\omega(\gamma_n')$. The paths $\delta_{j1}'$, $\beta_1$, and $\beta_n$ in $\Gamma$ can now be used to cut our original cycle $\gamma_1'\cdots\gamma_n'$ at $1$ into two cycles at (images of) $g$, as follows; see Figure \ref{fig:proof_n}.

    \begin{itemize}
        \item For each $i\not\in\l\{1,j,n\r\}$, let $\delta_i'\coloneqq\gamma_i'$.
            \vspace{-0.07in}
        \item Let $\delta_1'$ be the (unique) reduced path homotopic to $\bar{\beta_1}\gamma_1'$, so $\ell(\delta_1')=\ell(\bar{\eta})w_1$.
            \vspace{-0.07in}
        \item Similarly define $\delta_{j2}'$ (resp. $\delta_n'$) by reducing $\bar{\delta_{j1}'}\gamma_j'$ (resp. $\gamma_n'\beta_n$), so $\ell(\delta_{j2}')=\ell(\bar{\zeta_{j1}})w_j$ and $\ell(\delta_n')=w_n\ell(\eta)$.
    \end{itemize}
    The collections $\{\delta_1',\dots,\delta_{j-1}',\delta_{j1}'\}$ and $\{\delta_{j2}',\delta_{j+1}'\dots,\delta_n'\}$ of paths lie in $\Gamma$, but since we have
    \vspace{-0.05in}
    \makeatletter
        \newcommand*\bigcdot{\mathpalette\bigcdot@{.5}}
        \newcommand*\bigcdot@[2]{\mathbin{\vcenter{\hbox{\scalebox{#2}{$\m@th#1\bullet$}}}}}
    \makeatother
    \begin{equation*}
        [(\ell(\bar{\eta})w_1)\bigcdot w_2\bigcdot\cdots\bigcdot w_{j-1}\bigcdot\ell(\zeta_{j1})]_{G_{n-2}}=1=[(\ell(\bar{\zeta_{j1}})w_j)\bigcdot w_{j+1}\bigcdot\cdots\bigcdot w_{n-1}\bigcdot(w_n\ell(\eta))]_{G_{n-2}},
        \vspace{-0.05in}
    \end{equation*}
    they can be lifted to collections $\{\zeta_1,\dots,\zeta_{j-1},\zeta_{j1}\}$ and $\{\zeta_{j2},\zeta_{j+1}\dots,\zeta_n\}$ of paths in $\mc{C}(G_{i-1})$ and $\mc{C}(G_{n-i})$, so that, placing them tip-to-tail, they form two cycles at the image of $g$ in $G_{i-1}$ and $G_{n-i}$, respectively; all this works since $1\leq i-1,n-i\leq n-2$. By induction, we obtain paths $\delta_1,\dots,\delta_{j-1},\delta_{j1}$ and $\delta_{j2},\delta_{j+1}\dots,\delta_n$ in $\Gamma$ with the same initial and terminal points with $\delta_1',\dots$ etc, such that $[\delta_1\cdots\delta_{j-1}\delta_{j1}]_F=1=[\delta_{j2}\delta_{j+1}\cdots\delta_n]_F$.

    Finally, we can let $\gamma_i\coloneqq\delta_i$ for all $i\not\in\l\{1,j,n\r\}$, let $\gamma_1\coloneqq\beta_1\delta_1$, let $\gamma_j\coloneqq\delta_{j1}\delta_{j2}$, and let $\gamma_n\coloneqq\delta_n\bar{\beta_n}$. It can be easily checked that the initial and terminal points of $\gamma_i$ and $\gamma_i'$ coincide, hence preserving (1), and now
    \vspace{-0.05in}
    \begin{equation*}
        [\gamma_1\cdots\gamma_n]_F=[\beta_1\delta_1\cdots\delta_{j-1}\delta_{j1}\delta_{j2}\delta_{j+1}\cdots\delta_n\bar{\beta_n}]_F=[\beta_1\bar{\beta_n}]_F=[\eta]_F[\bar{\eta}]_F=1.
        \vspace{-0.05in}
    \end{equation*}
    Thus the paths $\gamma_1,\dots,\gamma_n$ satisfy condition (2) too, and this finishes the proof of Theorem \ref{mthm}.\qed

    \printbibliography

    {\footnotesize
        \vspace{0.2in}
        \textsc{Department of Mathematics and Statistics, McGill University, 805 Sherbrooke Street West, Montreal, QC, H3A 0B9, Canada}

        \textit{Email address: \tt{zhaoshen.zhai@mail.mcgill.ca}}
    }
\end{document}